\theoremstyle{plain}
\newtheorem{teo}{Theorem}[section]
\newtheorem{lemma}[teo]{Lemma}
\newtheorem{prop}[teo]{Proposition}
\newtheorem{cor}[teo]{Corollary}
\newtheorem{defi}[teo]{Definition}
\newtheorem{remark}[teo]{Remark}
\newtheorem{maintheorem}{Theorem}
\newcommand{\eps}{\varepsilon}
\newcommand{\N}{\mathbb N}
\newcommand{\vertiii}[1]{{\left\vert\kern-0.25ex\left\vert\kern-0.25ex\left\vert #1 
    \right\vert\kern-0.25ex\right\vert\kern-0.25ex\right\vert}}
\newcommand{\eqdef}{\stackrel{\scriptscriptstyle\rm def.}{=}}
\newcommand{\cU}{\mathcal{U}}
\newcommand{\cD}{\mathcal{D}}
\newcommand{\cR}{\mathcal{R}}
\newcommand{\cP}{\mathcal{P}}
\title{The disintegration of measures with periodic repetitive pattern}
  \author{B. Santiago} 
\address{Instituto de Matem\'atica e Estat\'istica, Universidade Federal Fluminense, Niterói-RJ, Brazil}
\email{brunosantiago@id.uff.br}
\author{R. Var\~ao} 
\address{Departamento de Matem\'atica, IMECC-UNICAMP Campinas-SP, Brazil.}
\email{varao@unicamp.br}
\date{}                                          
\begin{document}
\maketitle

\begin{abstract}
In the context of locally constant skew-products over the shift with circle fiber maps, we introduce the notion of measures with periodic repetitive pattern, inspired by \cite{GorIlyKleNal:05} and which includes the non-hyperbolic measures they construct. We prove that these measures have atomic disintegration along the central fibers.
\end{abstract}



\section{Introduction}

Many important accomplishments in Ergodic Theory come from the study of the disintegration of measures with respect to some dynamically relevant foliation. Just to mention a couple of them, it goes from the classical work of Anosov \cite{Anosov} that established ergodicity of conservative Anosov diffeomorphisms, for which the disintegration of volume along the stable and unstable foliations was needed to be understood, to the seminal work of Ledrappier and Young \cite{LY-I,LY-II} which establishes and compile many links between disintegration of measures and entropy. See also the book \cite{booktere}.

This makes the measure-theoretical behaviour of such foliations play a relevant role in dynamics. In some situations, this behaviour may be somewhat unexpected. Ruelle and Wilkinson \cite{ruelle.wilkinson} provide the first dynamical example of a pathological (meaning atomic) behaviour for a foliation. They exhibit a partially hyperbolic skew product $f$ for which the compact center foliation $\mathcal F^c$ has atomic disintegration with respect to volume measure: that is, there exists a set with full volume measure which intersects each center leave in finite points. Later, Ponce, Tahzibi and Var\~ao \cite{PTV} have extended the ideas from \cite{ruelle.wilkinson} to provide a minimal foliation (that is all leaves are dense) such that volume has monoatomic disintegration: there exists a set of full volume which intersects each leaf in just one point. 

Prior to Ruelle and Wilkinson's work, Milnor \cite{Milnor-foiled} wrote an example given by Katok of a foliation which has atomic disintegration for the volume measure. This was called Fubuni's nightmare since atomic disintegration is far from the intuition we get from Fubini's theorem of integration.

We may see the opposite of atomic disintegration when the conditional measures are equivalent to Lebesgue measures. This may also give important consequences for the dynamics (\cite{AVWI,VianaYang-AIHP13,varao.rigid} and references therein). It turns out that in dynamics it is not unusual to obtain either atomic or some ``higher'' regular disintegration (meaning for instance equivalent to Lebesgue) in many contexts (\cite{PV-desintegracao-geral,AVWI,LS-Israel04,damjanovic2019pathology}).

A very important feature present in many of the result mentioned before is the presence of some form of hyperbolicity. It turns out that it is much easier to analyse any dynamical property if we have some form of hyperbolicity. No hyperbolicity at all is a delicate subject although there are some attempts to deal with it (see \cite{PV-desintegracao-geral} and references therein). 

Moreover, for measures which have nice structure on the base the Invariance Principle (\cite{AV-InvariancePrinciple} and references therein) is also a very  effective way to deal with these structures. It gives us precise information on the disintegration of a measure once we have zero entropy on the fiber taken into consideration. Applications of the Invariance Principle in general conclude that measures with non zero Lyapunov exponent on the fiber have atomic disintegration while zero Lyapunov exponent measures have ``higher'' regular disintegration (see \cite{AVWI,AV-InvariancePrinciple,VianaYang-AIHP13,hhtu} for instance).

On a different perspective, many researchers have devoted attention to the problem of finding \emph{robust} classes of dynamical systems which have at least one zero Lyapunov exponent. For instance, the existence of non-hyperbolic ergodic measures for \emph{most} partially hyperbolic systems is a difficult problem and the known cases involve delicate constructions (see \cite{GorIlyKleNal:05,BBDcmp,BBDmoscow,BBDmz,BDK}). In particular, the structure of the measures obtained is not completely understood. For instance, only very recently it was proved in \cite{dominik2017feldman} that the non-hyperbolic ergodic measures constructed in \cite{GorIlyKleNal:05} have zero entropy.  

Even if one performs constructions such as \cite{GorIlyKleNal:05,BBDcmp,BBDmoscow,BBDmz,BDK} in the toy model scenario of a locally constant skew-product with circle fibers and with a shift map on the base, the resulting measures are far from being fully understood. 

Therefore, this paper is motivated by the following question: \emph{what is the disintegration along the fibers of the GIKN \cite{GorIlyKleNal:05} measures?} We attack this question in the setting of a locally constant skew product with circle fiber maps and the shift space with finitely many symbols on the base, the original (and simpler) setting of the GIKN construction. We solve the question, in this setting, by introducing the notion of \emph{measures with periodic repetitive pattern} (see Definition~\ref{def.gikn}), from which the GIKN measures form a particular case. We study some ergodic theoretic properties of measures with periodic repetitive pattern (for instance, we prove they have always zero entropy, as does the GIKN measures due to \cite{dominik2017feldman}) and prove that, when they are ergodic\footnote{Unfortunately, we are not able to tell if a measure with periodic repetitive pattern is always ergodic or not. We give some information on this direction, though. See Theorem~\ref{main.entropiadesabadoatarde}.}, their disintegration along the fibers is atomic (see the next section for precise definitions and statements). In particular, this shows that the GIKN measures have atomic disintegration. This is interesting as we are able to analyse the disintegration of these measures in the context of zero Lyapunov exponent and zero entropy. 

We point out that the quest for understanding the structure and fine properties of \emph{non-hyperbolic} measures within the setting of partially hyperbolic dynamics is a strongly active and wide open area of research. Let us mention for instance the recent work by Diaz-Gelfert-Rams \cite{DGR} in which they pursue this direction for non-trivial models of partially hyperbolic dynamics, given as a skew-product over a shift with \emph{concave interval fiber maps}. In their work, the atomicity of the disintegration of all ergodic measures (Theorem 2.15) plays an important role. 


This paper is organized as follows: in Section~\ref{sec.definicao} we present our setting, give the definition of measures with periodic repetitive pattern and state precisely our results. In Section~\ref{sec.entropy} we show that measures with periodic repetitive pattern always have zero entropy. Our main result, about the disintegration of such measures is proven in Section~\ref{sec.desintegrou}. 

\textbf{Acknowledgments:} We would like to thank Gabriel Ponce, Alejandro Kocsard and Sébastein Alvarez for their comments on this work.  We would like to thank the warm hospitality of IMECC-Unicamp and IME-UFF, where this work was carried on. B.S. was supported by National Council for Scientific and Technological Development – CNPq, Brazil. R.V. was partially supported by National Council for Scientific and Technological Development – CNPq, Brazil and partially supported by FAPESP (Grants \#17/06463-3 and \# 16/22475-9).

\section{Setting and statement}\label{sec.definicao}
We fix $k\in\N$ and consider $k$ diffeomorphisms $f_j:\mathbb{S}^1\to\mathbb{S}^1$, of class $C^1$. Let $\Sigma_k=\{1,...,k\}^{\mathbb{Z}}$ be the space of two-sided sequences over a finite alphabet with $k$ symbols. We denote by $\sigma:\Sigma_k\to\Sigma_k$ the shift map. As we shall have to deal with sequences of sequences, in general the elements of $\Sigma_k$ will be denoted as $\xi=(\omega_{n})_{n\in\mathbb{Z}}$.

Consider $M\eqdef\Sigma_k\times\mathbb{S}^1$ and the locally constant \emph{skew product map} 
$T:M\to M$, which is given by 
$$T\big((\xi,x)\big)=\big(\sigma(\xi),f_{\omega_0}(x)\big).$$
The positive action of $T$ on the fibers of $M$ is determined by the semi-group $G^{+}(f_1,...,f_k)$ of all possible compositions $f_{\omega_n}\circ\dots\circ f_{\omega_1}$. Given $\xi\in\Sigma_k$, we denote $f_{\xi}^n(q)\eqdef f_{\xi_n}\circ\dots\circ f_{\xi_1}(q)$.

Notice that if a point $x\in M$ is periodic for $T$, then there must exist a finite sequence $\omega_1\omega_2...\omega_{n}$ of symbols $\{1,...,k\}$ so that considering the point $\xi=...\omega_1...\omega_n\omega_1...\omega_{n}...$ in $\Sigma_k$ we have $x=(\xi,q)$ for some $q\in\mathbb{S}^1$. In this case, we shall make an abuse of notation and refer the finite sequence $\omega_1\omega_2...\omega_{n}$ as the point $\xi\in\Sigma_k$, or sometimes we say that $\xi\in\Sigma_k$ is determined by the finite sequence $\omega_1\omega_2...\omega_{n}$.

\subsection{Probability measures}
For a compact metric space $(X,d)$, we shall denote by $\cP(X)$ the space of probability measures over $X$, endowed with its weak-star topology. Given a discrete time dynamical system $T:X\to X$, we denote by $\cP_T(X)$ those measures which are invariant under $T$. We denote by $h_{\mu}(T)$ the measure theoretical entropy of $T$ with respect to an element $\mu\in\cP_T(X)$. See \cite{waltinho} for definitions and basic concepts in ergodic theory.

\subsection{Measure disintegration}

As we are exclusively working on $M = \Sigma_k \times \mathbb S^1$ and the foliation considered is $\{\{\xi\} \times \mathbb S^1\}_{\xi \in \Sigma_k}$ we state Rohklin's disintegration theorem already for this context (see \cite{EW} or the appendix on measure disintegration of \cite{booktere} for more general statements).

\begin{teo}
 Let $\mu$ be a Borel measure on $M$. Then there exists a family of probabilities $\{\mu_\xi\}_{\xi \in \Sigma_k}$ such that $\mu_\xi(\{\xi\}\times \mathbb S^1) = 1$ and
 $$\mu(A) = \int_{\Sigma_2} \mu_{\omega}(A) d \nu.$$
 where $\nu=\pi_{\star}\mu$, and $\pi: M \rightarrow \Sigma_k$ is natural the projection map onto the base coordinate.
\end{teo}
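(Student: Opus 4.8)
The statement is the classical (Rokhlin) disintegration theorem, specialized to the product $M=\Sigma_k\times\mathbb{S}^1$ fibered by the vertical circles; the plan is to carry out the standard construction, which is considerably simpler here because of the product structure. Write $\pi:M\to\Sigma_k$ and $p:M\to\mathbb{S}^1$ for the two coordinate projections, identify each fiber $\{\xi\}\times\mathbb{S}^1$ with $\mathbb{S}^1$ via $p$, and put $\nu=\pi_\star\mu$. Fix once and for all a countable set $\mathcal{D}=\{\varphi_i\}_{i\ge 1}\subseteq C(\mathbb{S}^1)$ that is dense in the uniform norm, contains the constant $\mathbf{1}$, and is a $\mathbb{Q}$-vector space.

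The first step is to produce the candidate conditional measures. For each $i$ consider the bounded Borel function $\varphi_i\circ p$ on $M$ and its conditional expectation $E_\mu[\varphi_i\circ p\mid \pi^{-1}\mathcal{B}(\Sigma_k)]$; being $\pi^{-1}\mathcal{B}(\Sigma_k)$-measurable it has the form $g_i\circ\pi$ for some Borel $g_i:\Sigma_k\to\mathbb{R}$, defined up to a $\nu$-null set. (Equivalently, and more explicitly, one may take $g_i$ to be the $\nu$-a.e. limit of the bounded martingale $\xi\mapsto \mu(P_n(\xi)\times\cdot)/\nu(P_n(\xi))$ tested against $\varphi_i$, where $P_n(\xi)$ is the atom of $\xi$ in a refining sequence of finite cylinder partitions of $\Sigma_k$, using the martingale convergence theorem.) Since $\mathcal{D}$ is countable there is a single Borel set $\Sigma_k^0$ with $\nu(\Sigma_k^0)=1$ on which, simultaneously for all $i,j$ and all rational $a,b$, one has $g_{\mathbf{1}}\equiv 1$, $|g_i|\le\|\varphi_i\|_\infty$, $g_i\ge 0$ whenever $\varphi_i\ge 0$, and $g_{a\varphi_i+b\varphi_j}=a g_i+b g_j$.

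For $\xi\in\Sigma_k^0$ the map $\varphi_i\mapsto g_i(\xi)$ is then a positive, unit-norm, $\mathbb{Q}$-linear functional on the uniformly dense subspace $\mathrm{span}_{\mathbb{Q}}\mathcal{D}$, so it extends uniquely to a positive linear functional of norm one on $C(\mathbb{S}^1)$, and the Riesz representation theorem yields a unique Borel probability $\mu_\xi$ on $\mathbb{S}^1$ with $\int\varphi_i\,d\mu_\xi=g_i(\xi)$ for all $i$; transport $\mu_\xi$ to $\{\xi\}\times\mathbb{S}^1$, and for $\xi\notin\Sigma_k^0$ set $\mu_\xi$ equal to some fixed point mass. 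Measurability of $\xi\mapsto\mu_\xi(B)$ for Borel $B\subseteq\mathbb{S}^1$ follows by a monotone class argument: $\xi\mapsto\int\varphi\,d\mu_\xi$ is measurable for every $\varphi\in C(\mathbb{S}^1)$ (a uniform limit of the $g_i$'s), hence $\xi\mapsto\mu_\xi(U)$ is measurable for open $U$ by Urysohn's lemma, and the family of Borel sets for which measurability holds is a $\lambda$-system.

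It remains to check the disintegration identity. By definition of conditional expectation, $\int_{\pi^{-1}(E)}\varphi_i\circ p\,d\mu=\int_E g_i\,d\nu=\int_E\big(\int_{\mathbb{S}^1}\varphi_i\,d\mu_\xi\big)\,d\nu(\xi)$ for every Borel $E\subseteq\Sigma_k$ and every $i$; approximating $\mathbf{1}_B$ by elements of $\mathcal{D}$ in a uniformly bounded way and using dominated convergence gives $\mu(E\times B)=\int_E\mu_\xi(B)\,d\nu(\xi)$ for all Borel $E\subseteq\Sigma_k$ and $B\subseteq\mathbb{S}^1$, and since the collection of Borel $A\subseteq M$ with $\mu(A)=\int_{\Sigma_k}\mu_\xi(A)\,d\nu$ is a $\lambda$-system containing the $\pi$-system of measurable rectangles, it is all of $\mathcal{B}(M)$. (Uniqueness of $\{\mu_\xi\}$ up to $\nu$-a.e.\ equality, not needed below, follows from the same identity.) The only genuinely delicate point is the second step: passing, on a single full-measure set, from the countably many a.e.-defined conditional expectations to an honest measurable family of probability measures; everything else is a routine approximation argument that works precisely because $\mathbb{S}^1$ is a compact metric, hence standard, space.
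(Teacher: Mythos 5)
Your proposal is correct: it is the standard construction of the Rokhlin disintegration (conditional expectations of a countable dense $\mathbb{Q}$-vector space of test functions, a single full-measure set on which positivity, normalization and $\mathbb{Q}$-linearity hold simultaneously, Riesz representation to produce $\mu_\xi$, and a $\pi$-$\lambda$ argument for measurability and for the integral identity), and it exploits the product structure of $M=\Sigma_k\times\mathbb{S}^1$ exactly where it should. Note that the paper itself offers no proof of this theorem — it is stated as a specialization of Rokhlin's disintegration theorem with a citation to \cite{EW} and to the appendix of \cite{booktere} — and your argument is essentially the one found in those references, so there is nothing to compare beyond the observation that you have supplied the proof the paper delegates to the literature. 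The only cosmetic point is that for $\xi\notin\Sigma_k^0$ the default measure should be a point mass \emph{on the fiber} $\{\xi\}\times\mathbb{S}^1$ so that the normalization $\mu_\xi(\{\xi\}\times\mathbb{S}^1)=1$ holds for every $\xi$, which is clearly what you intend.
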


The family $\{\mu_\xi\}_{\xi \in \Sigma_k}$ is known as the conditional measures of $\mu$, this family is unique $\nu$ almost every point $\xi$. 
\begin{defi}
We say that a measure $\mu\in\cP_T(M)$ \emph{has atoms in its disintegration} if there exists a set $W\subset\Sigma_k$ with $\nu(W)>0$ such that if $\xi\in W$ then the conditional measure $\mu_{\xi}$ is atomic. We say that $\mu$ has \textit{atomic disintegration with respect to the vertical fibers} provided that the conditional measures of $\mu$ are atomic measures $\nu$-almost every $\xi\in\Sigma_k$.
\end{defi}
\begin{remark}
\label{rem.ergodiccase}
Notice that if $\mu\in\cP_T(M)$ is ergodic and has atoms in its disintegration then actually $\mu$ has atomic disintegration, as the set of atoms is an invariant set.
\end{remark}

\subsection{GIKN measures}

In \cite{GorIlyKleNal:05}, Gorodetski, Ylyashenko, Kleptsyn and Nalsky provided a construction of \emph{ergodic} measures with zero fiber Lyapynov exponents\footnote{Since we will not use the concept of fiber Lyapunov exponent in our work, we refer the reader to \cite{GorIlyKleNal:05} for its definition.} for an open class skew-product maps $T:M\to M$, meaning that the map $G:M\to M$ associated with $\{g_1,...,g_k\}$ also has a measure with the same property, whenever $g_i$ is close enough to $f_i$ for every $i=1,...,k$. 

The motivating problem for our work was to try to determine the disintegration of the GIKN measures.

For the sake of completeness, let us briefly summarize the construction. The measure $\mu$ is obtained as a limit of Dirac measures uniformly distributed along periodic orbits. One starts with an element $g_0\in G^{+}(f_1,...,f_k)$, having an attractive fixed point $q_0$ and having a Lyapunov exponent $\lambda_0$. One fixes the point $\xi_0\in\Sigma_k$ determined by the finite word $\omega_{\pi_0}...\omega_{1}$ such that $g_0=f^{\pi_0}_{\xi_0}$. Then $(\xi_0,q_0)\in M$ is $T$-periodic. The sequence of periodic points $(\xi_n,q_n)$ is obtained by an inductive procedure (which is precisely descried in Lemma 3 of \cite{GorIlyKleNal:05}). 

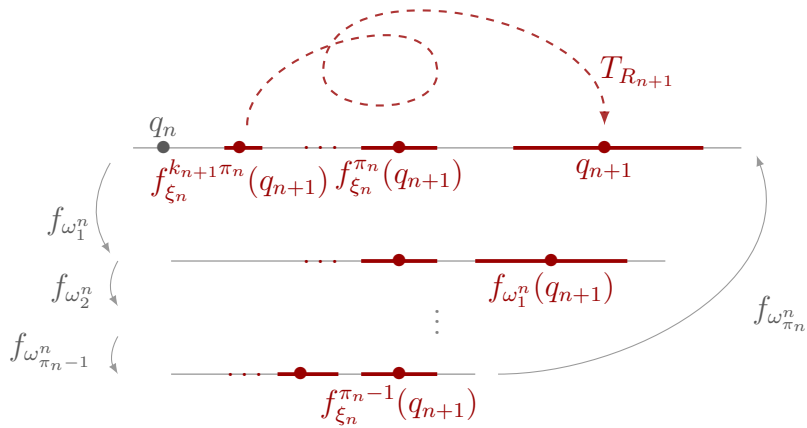
\begin{figure}[h!]
\centering	
\begin{tikzpicture}
\draw[color=gray!97] (0,4)--(8,4);
\draw[color=gray!97] (0.5,2.5)--(7,2.5);
\draw[color=gray!97] (4,1.8) node{$\vdots$};
\draw[color=gray!97] (0.5,1)--(4.5,1);
\draw[->,>=latex, color=gray!80] (-.3,3.8) to[bend right] (-.3,2.6); 
\draw[->,>=latex,color=gray!80] (-.2,2.5) to[bend right] (-.2,1.9); 
\draw[->,>=latex, color=gray!80] (-.2,1.5) to[bend right] (-.2,1);
\draw[->,>=latex, color=gray!80] (4.8,1) to[out=0, in=290] (8.2,3.9);
\draw [line width=0.05cm, color=red!60!black] (5,4)--(7.5,4);
\draw [line width=0.05cm, color=red!60!black] (3,4)--(4,4);
\draw (2.5,4) node[ultra thick, color=red!60!black]{$\dots$};
\draw [line width=0.05cm, color=red!60!black] (1.2,4)--(1.7,4);
\draw [line width=0.05cm, color=red!60!black] (4.5,2.5)--(6.5,2.5);
\draw [line width=0.05cm, color=red!60!black] (3,2.5)--(4,2.5);
\draw (2.5,2.5) node[ultra thick, color=red!60!black]{$\dots$};
\draw [line width=0.05cm, color=red!60!black] (3,1)--(4,1);
\draw [line width=0.05cm, color=red!60!black] (1.9,1)--(2.7,1);
\draw (1.5,1) node[ultra thick, color=red!60!black]{$\dots$};
\draw[->,>=latex,thick, dashed, color=red!60!black, opacity=.8] (1.5,4.3) to[out=90,in=90] (4,5) to[out=-90, in=-90] (2.5,5) to[out=90,in=90] (6.2,4.3);
\draw (-.4,3) node[left,color=black!65!white]{$f_{\omega^n_1}$};
\draw (-.35,2.1) node[left,color=black!65!white]{$f_{\omega^n_2}$};
\draw (-.4,1.3) node[left,color=black!65!white]{$f_{\omega^n_{\pi_n-1}}$};
\draw (8,1.8) node[right,color=black!65!white]{$f_{\omega^n_{\pi_n}}$};
\draw (6,5) node[right,color=red!60!black]{$T_{R_{n+1}}$};
\draw (6.2,4) node[color=red!60!black]{$\bullet$};
\draw (6.2,4) node[below, color=red!60!black]{$q_{n+1}$};
\draw (3.5,4) node[color=red!60!black]{$\bullet$};
\draw (3.5,4) node[below, color=red!60!black]{$f^{\pi_n}_{\xi_n}(q_{n+1})$};
\draw (1.4,4) node[color=red!60!black]{$\bullet$};
\draw (1.4,4) node[below, color=red!60!black]{$f^{k_{n+1}\pi_n}_{\xi_n}(q_{n+1})$};
\draw (5.5,2.5) node[color=red!60!black]{$\bullet$};
\draw (5.5,2.5) node[below, color=red!60!black]{$f_{\omega^n_1}(q_{n+1})$};
\draw (3.5,2.5) node[color=red!60!black]{$\bullet$};
\draw (3.5,1) node[color=red!60!black]{$\bullet$};
\draw (3.5,1) node[below, color=red!60!black]{$f^{\pi_n-1}_{\xi_n}(q_{n+1})$};
\draw (2.2,1) node[color=red!60!black]{$\bullet$};
\draw (0.4,4) node[color=black!65!white]{$\bullet$};
\draw (0.4,4) node[above, color=black!65!white]{$q_n$};
\end{tikzpicture}
\caption{\label{fig.gikn} The iterative procedure in the GIKN construction.}
\end{figure}

More precisely, to define the $n+1^{\textrm{th}}$-periodic orbit from the $n^{\textrm{th}}$ one, they choose a small interval $J_n$ around the point $q_n$ in the circle, and apply to it a new map 
\[
g_{n+1}\eqdef T_{R_n}\circ g_n^{k_{n+1}}
\]
where $g_n=f^{\pi_n}_{\xi_n}$, while $T_{R_n}\in G^+(f_1,...,f_k)$ is a composition of $R_n$ maps. This ``noisy'' which is introduced in the previous periodic orbit $(\xi_n,q_n)$ is devised to decrease the Lyapunov exponent and spread a little more the support of the measure\footnote{For this, they assume minimality of the iterated function system and an expansion condition.} The noisy is also chosen so that $g_{n+1}(J_n)\subset J_n$ with $|g_{n+1}^{'}|_{J_n}|<1$. Thus, there exists a $g_{n+1}$-fixed point $q_{n+1}\in J_n$, and the finite word $\xi_{n+1}$ associated with the composition $g_{n+1}$ gives a new periodic orbit $(\xi_{n+1},q_{n+1})$. The important parameters in the construction are the large number $k_{n+1}$, the noisy map $T_{R_{n+1}}$ and the small interval $J_n$. They are chosen so that the Lyapunov exponent $\lambda_{n+1}$ of the new measure satisfies $\lambda_n/2\leq\lambda_{n+1}<0$. For the ergodicity of the limit measure, it is fundamental that the new orbit follows a ``repetitive pattern'' with respect to the initial one. This is translated by two conditions: there exists a constant $C=C(f_1,\dots,f_k)$ such that 
\[\frac{R_{n+1}}{k_{n+1}\pi_n}\leq C\lambda_n,\]
and also that
\[
(\max_{j}|f_j^{\prime}|)^{\pi_n}|J_n|<\gamma_n,
\]
where the sequence $\gamma_n$ is summable. These two condition taken together ensure that all the subsequent orbits shadow the $n^{\textrm{th}}$ orbit from time $0$ to $\pi_{n}-1$ during a very large proportion of their own period. Using this, they manage to show (in Lemma 2) that the limit measure is ergodic.


\subsection{Periodic orbits with repetitive pattern} 

We propose the definition below, in which we have tried to extract the essential features in the GIKN construction, with respect to the problem of determining the disintegration of the limit measure. 

Given an interval $J\subset\mathbb{S}^1$ we denote by $|J|$ its length. Also, for a map $g:\mathbb{S}^1\to\mathbb{S}^1$, we denote its derivative by $g^{\prime}$. The following definition is largely inspired by \cite{GorIlyKleNal:05}.

\begin{defi}
	\label{def.gikn}
	We say that a sequence $X_n=O\left((\xi_n,q_n)\right)$ of $T$-periodic orbits, where $\xi_n\in\Sigma_k$ is generated by the finite sequence $\omega^n_1...\omega^n_{\pi_n}$, \emph{has a repetitive pattern} if 
\begin{enumerate}
    \item\label{jn} There exists a nested sequence of intervals $J_{n+1}\subset J_{n}\subset\mathbb{S}^1$ such that $q_n\in J_{n}$ Moreover, for every $n$, $0<|J_{n+1}|<|J_n|$ and $|J_n|\to 0$.     
    \item\label{ojotazero} The intervals $J_0$ and $f_{\omega_\ell^0}\circ\dots\circ f_{\omega_1^0}(J_0)$, for $\ell=1,...\pi_0-1$ are two by two disjoint.
    \item\label{apalavra} For each $n\geq 0$, the map $g_n\eqdef f_{\omega^n_{\pi_n}}\circ\dots\circ f_{\omega^n_1}$ satisfies: $q_n\in\operatorname{Fix}(g_n)$ and $|g_n^{\prime}|_{J_n}|<1$. Moreover, for each $n\geq 1$, there exists $T_{R_n}$ which is a composition of $R_n\geq 1$ maps $f_1,...,f_k$ and an integer $k_n\geq 2$ such that $g_n=T_{R_n}\circ g_{n-1}^{k_n}$.
    \item\label{key} The numerical sequence $\lambda_n\eqdef\frac{R_n}{k_n\pi_{n-1}}$ is summable. 
\end{enumerate}
\end{defi}	

By condition \eqref{jn} in the definition, the intersection $\cap_{n=0}^{\infty}J_n$ is a singleton, whose element we denote by $q$. In this definition, the family of maps $g_n:\mathbb{S}^1\to\mathbb{S}^1$ works as a kind of \emph{first return} maps to the initial interval $J_0$ of the construction. Each $g_n$ is a contraction of a sub-interval of $J_0$. 

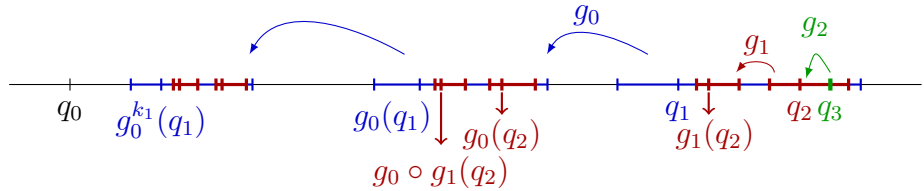
\begin{figure}[h!]
	\begin{center}
		\begin{tikzpicture}[scale=.8]
		\draw (0,0) -- (15,0);
		\draw (1,-.1) -- (1,.1);
		\draw (1,-.1) node[below]{$q_0$};
		\draw[thick, blue!80!black] (10,0) -- (14,0);
		\draw[thick, blue!80!black] (10,-.1) -- (10,.1); 
		\draw[thick, blue!80!black] (14,-.1) -- (14,.1);
		\draw[thick, blue!80!black] (11,-.1) -- (11,.1);
		\draw (11,-.1) node[below, color=blue!80!black]{$q_1$};
		\draw[very thick, red!65!black] (12.5,0) -- (13.8,0);
		\draw[very thick, red!65!black] (12.5,-.1) -- (12.5,.1);
		\draw[very thick, red!65!black] (13,-.1) -- (13,.1);
		\draw[very thick, red!65!black] (13.8,-.1) -- (13.8,.1);
		\draw (13,-.1) node[below, color=red!65!black]{$q_2$};
		\draw[very thick, red!65!black] (11.3,0) -- (12,0);
		\draw[very thick, red!65!black] (11.3,-.1) -- (11.3,.1);
		\draw[very thick, red!65!black] (11.5,-.1) -- (11.5,.1);
		\draw[very thick, red!65!black] (12,-.1) -- (12,.1);
		\draw (11.6,-.4) node[below, color=red!65!black]{$g_1(q_2)$};
		\draw[->, thick, color=red!65!black] (11.5,-.13) -- (11.5,-.5);
		\draw[thick, blue!80!black] (6,0) -- (8.85,0);
		\draw[thick, blue!80!black] (6,-.1) -- (6,.1); 
		\draw[thick, blue!80!black] (8.85,-.1) -- (8.85,.1);
		\draw[thick, blue!80!black] (6.75,-.1) -- (6.75,.1);
		\draw (6.3,-.1) node[below, color=blue!80!black]{$g_0(q_1)$};
		\draw[very thick, red!65!black] (7.9,0) -- (8.65,0);
		\draw[very thick, red!65!black] (7.9,-.1) -- (7.9,.1);
		\draw[very thick, red!65!black] (8.1,-.1) -- (8.1,.1);
		\draw[very thick, red!65!black] (8.65,-.1) -- (8.65,.1);
		\draw (8.1,-.4) node[below, color=red!65!black]{$g_0(q_2)$};
		\draw[->, thick, color=red!65!black] (8.1,-.13) -- (8.1,-.5);
		\draw[very thick, red!65!black] (7,0) -- (7.5,0);
		\draw[very thick, red!65!black] (7,-.1) -- (7,.1);
		\draw[very thick, red!65!black] (7.1,-.1) -- (7.1,.1);
		\draw[very thick, red!65!black] (7.5,-.1) -- (7.5,.1);
		\draw (7.1,-1) node[below, color=red!65!black]{$g_0\circ g_1(q_2)$};
		\draw[->, thick, color=red!65!black] (7.1,-.13) -- (7.1,-1);
		\draw[thick, blue!80!black] (2,0) -- (4,0);
		\draw[thick, blue!80!black] (2,-.1) -- (2,.1); 
		\draw[thick, blue!80!black] (4,-.1) -- (4,.1);
		\draw[thick, blue!80!black] (2.5,-.1) -- (2.5,.1);
		\draw (2.5,-.1) node[below, color=blue!80!black]{$g_0^{k_1}(q_1)$};
		\draw[very thick, red!65!black] (3.4,0) -- (3.9,0);
		\draw[very thick, red!65!black] (3.4,-.1) -- (3.4,.1);
		\draw[very thick, red!65!black] (3.5,-.1) -- (3.5,.1);
		\draw[very thick, red!65!black] (3.9,-.1) -- (3.9,.1);
		\draw[very thick, red!65!black] (2.7,0) -- (3.1,0);
		\draw[very thick, red!65!black] (2.7,-.1) -- (2.7,.1);
		\draw[very thick, red!65!black] (3.1,-.1) -- (3.1,.1);
		\draw[very thick, red!65!black] (2.8,-.1) -- (2.8,.1);
		\draw[->,>=latex, color=blue!80!black] (10.5,.5) to [out=150, in=60] (8.84,.5);
		\draw (9.5,.8) node[above,color=blue!80!black]{$g_0$};
		\draw[->,>=latex, color=blue!80!black] (6.5,.5) to [out=150, in=60] (3.95,.5);
		\draw[->,>=latex, color=red!65!black] (12.58,.2) to [out=100, in=50] (11.95,.2);
		\draw (12.3,.3) node[above, color=red!65!black]{$g_1$};
		\draw[ultra thick, color=green!60!black] (13.5,-.1) -- (13.5,.1);
		\draw (13.5,-.1) node[below, color=green!60!black]{$q_3$};
		\draw[->,>=latex, color=green!60!black] (13.5,.2) ..controls (13.25,.6) and (13.25,.6).. (13.1,.1);
		\draw (13.25,.6) node[above,color=green!60!black]{$g_2$};
		\end{tikzpicture}
		\caption{\label{f.boxes}Definition \ref{def.gikn}. $g_0$ is a contraction of the black interval ($J_0$) to itself, and fixes $q_0$; $g_1$ contracts the blue interval to itself ($J_1$) and fixes $q_1$; $g_2$ fixes $q_2$ and contracts the red interval ($J_2$) to itself. }
	\end{center}
\end{figure}

\subsection{Ergodic properties of periodic orbits with repetitive pattern}

As it may be of independent interest, we study here some ergodic theoretical properties of sequences of periodic orbits with repetitive pattern. We apply the tools of \cite{dominik2017feldman} to establish the following

\begin{maintheorem}
\label{main.entropiadesabadoatarde}
Let $X_n=O\left((\xi_n,q_n)\right)$ be a sequence of periodic orbits with repetitive pattern. Then, the sequence of projected measures
\[
\nu_n\eqdef\frac{1}{\pi_n}\sum_{\ell=0}^{\pi_n-1}\delta_{\sigma^{\ell}(\xi_n)}
\]
converges to a $\sigma$-invariant and ergodic measure $\nu$, with $h_{\nu}(\sigma)=0$.
\end{maintheorem}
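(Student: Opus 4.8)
The proof proceeds in two main steps, the first elementary and the second being the technical heart. By condition~\eqref{apalavra}, the finite word $\omega_1^n\cdots\omega_{\pi_n}^n$ defining $\xi_n$ is the concatenation of $k_n$ copies of the word $\omega_1^{n-1}\cdots\omega_{\pi_{n-1}}^{n-1}$ defining $\xi_{n-1}$, followed by the length-$R_n$ word associated with $T_{R_n}$; in particular $\pi_n=k_n\pi_{n-1}+R_n$, and since $k_n\ge 2$ the periods grow at least geometrically, so $\sum_n\pi_{n-1}^{-1}<\infty$. First I would prove that $(\nu_n)$ converges in the weak-$\star$ topology, which amounts to showing that $\int\varphi\,d\nu_n$ converges whenever $\varphi$ is the indicator of a cylinder depending only on the coordinates in $[-M,M]$. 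The key point is that the bi-infinite periodic sequence generated by $k_n$ copies of $\omega_1^{n-1}\cdots\omega_{\pi_{n-1}}^{n-1}$ is exactly $\xi_{n-1}$; hence, reading one period of $\xi_n$, the orbit of $\xi_n$ differs from that of $\xi_{n-1}$ only through the single occurrence of the block $T_{R_n}$, which affects at most $R_n+2M$ of the $\pi_n$ positions in that period. Taking into account the different normalisations ($\nu_n$ averages over $\pi_n$ iterates, $\nu_{n-1}$ over $\pi_{n-1}$), one obtains
\[
\Big|\int\varphi\,d\nu_n-\int\varphi\,d\nu_{n-1}\Big|\ \le\ C(M)\Big(\lambda_n+\frac{1}{\pi_{n-1}}\Big).
\]
Since $\sum\lambda_n<\infty$ by condition~\eqref{key}, the sequence $\big(\int\varphi\,d\nu_n\big)_n$ is Cauchy; as cylinder indicators span a dense subspace of $C(\Sigma_k)$ and $\nu_n(\Sigma_k)=1$ for all $n$, we conclude that $\nu_n\to\nu$ weak-$\star$ for some $\nu\in\cP(\Sigma_k)$, and $\nu$ is $\sigma$-invariant since each $\nu_n$ is.

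To obtain ergodicity and the vanishing of the entropy I would upgrade this to convergence in the Feldman--Katok pseudometric $\bar F$ on $\cP_\sigma(\Sigma_k)$ and then invoke \cite{dominik2017feldman}. The same block decomposition is exactly what makes this work: up to a subset of iterates of density $R_n/\pi_n\le\lambda_n$ (those hitting the noise block $T_{R_n}$), the orbit of $\xi_n$ is a rearrangement of $k_n$ copies of the orbit of $\xi_{n-1}$, and the Feldman--Katok pseudometric is built to be insensitive precisely to such repetitions and to order-preserving time reparametrisations. Thus $\bar F(\nu_n,\nu_{n-1})$ is bounded by a summable sequence of order $\lambda_n$, so $(\nu_n)$ is $\bar F$-Cauchy and its $\bar F$-limit coincides with the weak-$\star$ limit $\nu$. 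Each $\nu_n$ is supported on a single periodic orbit, hence is ergodic with zero entropy; by the results of \cite{dominik2017feldman} on the behaviour of ergodicity and of measure-theoretic entropy along Feldman--Katok convergent sequences --- the latter being precisely the tool used there to prove that the GIKN measures have zero entropy --- the limit $\nu$ is ergodic and $h_\nu(\sigma)=0$. (Ergodicity of $\nu$ can alternatively be obtained from the classical GIKN criterion of \cite{GorIlyKleNal:05}, whose hypotheses the repetitive pattern conditions~\eqref{apalavra}--\eqref{key} are designed to meet.)

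The main obstacle is the second step. The heuristic that $\nu_n$ differs from $\nu_{n-1}$ only by the insertion of a block of relative length at most $\lambda_n$ is transparent, but turning it into a genuine summable bound on $\bar F(\nu_n,\nu_{n-1})$ requires carefully exhibiting the Feldman--Katok matchings between the two periodic orbits, controlling the unmatched positions purely by the combinatorics of the word decomposition (rather than by a fixed window size, as in the weak-$\star$ estimate), and checking that the matchings are compatible across levels so that one really obtains a single $\bar F$-limit which can be identified with $\nu$. One must also pin down exactly which statements of \cite{dominik2017feldman} about the semicontinuity of entropy and the closedness of the set of ergodic measures under $\bar F$ are being invoked. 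The first step, by contrast, is an elementary counting argument, and $\sigma$-invariance of $\nu$ is immediate.
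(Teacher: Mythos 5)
Your strategy coincides with the paper's: decompose $\xi_n$ as $k_n$ copies of $\xi_{n-1}$ followed by the noise word, use this to show the sequence of periodic orbits is Feldman--Katok Cauchy with increments of order $\lambda_n$ plus a negligible boundary term, and then invoke the results of \cite{dominik2017feldman} (convergence of the generic measures, closedness of ergodicity, and lower semicontinuity of entropy under $\overline{F}_K$ convergence) to conclude. The technical point you flag is resolved in the paper by letting the matching scale $\delta=2^{-n}$ shrink with $n$, so that the window of agreement required by the shift metric costs only $2n$ unmatched positions against a period $\pi_n>2^n$, giving $\overline{F}_K(y_n,y_{n+1})\leq\lambda_{n+1}+\tfrac{n+1}{2^n}$; your separate weak-$\star$ convergence step is then redundant, since the $\overline{F}_K$ machinery already yields $\nu_n\to\nu$.
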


Despite of this theorem, the behaviour of the sequence of $T$-invariant measures 
\[
\mu_{X_n}\eqdef\frac{1}{\pi_n}\sum_{\ell=0}^{\pi_n-1}\delta_{F^{\ell}(x_n)}
\]
seems to be more complicated. We do not know if this sequence always converges, nor assuming it converges, whether or not the limit is an ergodic measure. Nevertheless, we introduce the following terminology

\begin{defi}[Measures with periodic repetitive pattern]
	\label{defi.pomeasures}
We say that a probability measure $\mu$ over $M$ has \emph{periodic repetitive pattern} if there exists a sequence $X_n=O\left((\xi_n,q_n)\right)$ of $T$-periodic orbits with repetitive pattern such that the measures
$\mu_{X_n}$ converges to  $\mu$. 
\end{defi}

Using Theorem~\ref{main.entropiadesabadoatarde} we obtain

\begin{cor}
\label{cor.regis}
If $\mu$ is an $T$-invariant measure with periodic repetitive pattern then its entropy satisfies $h_{\mu}(T)=0$. 
\end{cor}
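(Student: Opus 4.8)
The plan is to deduce the vanishing of the entropy of $\mu$ from the vanishing of the entropy of the projected measure $\nu$ provided by Theorem~\ref{main.entropiadesabadoatarde}, by controlling the fiber contribution. Recall that $T$ is a skew product over $(\Sigma_k,\sigma)$ with circle fibers, so by the Abramov--Rokhlin formula for entropy of skew products we have
\[
h_{\mu}(T)=h_{\nu}(\sigma)+h_{\mu}(T\mid \pi^{-1}\mathcal B_{\Sigma_k}),
\]
where $\nu=\pi_{\star}\mu$ and the second term is the conditional (fiber) entropy. Since Theorem~\ref{main.entropiadesabadoatarde} gives $h_{\nu}(\sigma)=0$, it remains to show that the fiber entropy of $\mu$ vanishes. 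First I would verify that $\nu$ as produced in Theorem~\ref{main.entropiadesabadoatarde} is indeed $\pi_\star\mu$: by Definition~\ref{defi.pomeasures}, $\mu_{X_n}\to\mu$, and pushing forward under $\pi$ sends $\mu_{X_n}$ to the measure $\frac{1}{\pi_n}\sum_{\ell=0}^{\pi_n-1}\delta_{\sigma^\ell(\xi_n)}=\nu_n$; since $\pi$ is continuous, $\pi_\star$ is weak-$\star$ continuous, so $\nu_n=\pi_\star\mu_{X_n}\to\pi_\star\mu$, and by Theorem~\ref{main.entropiadesabadoatarde} this limit is the ergodic measure $\nu$ with zero entropy.

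For the fiber entropy, the key observation is that each fiber is a circle $\mathbb S^1$ and the fiber maps are $C^1$ diffeomorphisms, hence have bounded derivative; consequently the fiber dynamics has finite topological entropy and, more usefully, zero topological entropy is not available in general, so instead I would argue directly. The cleanest route is to bound the fiber entropy by the positive fiber Lyapunov exponent via the Ruelle (Margulis--Ruelle) inequality applied fiberwise: the conditional entropy along the fibers is at most $\int \max\{0,\lambda^c(\mu,x)\}\,d\mu$, where $\lambda^c$ is the fiber exponent. Alternatively, and more in the spirit of the paper, one expects that a measure with periodic repetitive pattern has non-positive fiber Lyapunov exponent: indeed, each approximating measure $\mu_{X_n}$ is supported on a periodic orbit of the map $g_n$, which by condition~\eqref{apalavra} of Definition~\ref{def.gikn} satisfies $|g_n'|_{J_n}|<1$, so the fiber exponent of $\mu_{X_n}$ is negative; one then needs upper semicontinuity (or a direct estimate) to pass to the limit and conclude $\lambda^c(\mu)\le 0$. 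Combined with Ruelle's inequality this forces the fiber entropy to be $0$, and hence $h_\mu(T)=0$.

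The main obstacle I anticipate is the passage to the limit for the fiber exponent: Lyapunov exponents are only upper semicontinuous in the measure along sequences where one has uniform integrability of $\log\|Df^c\|$, which here is automatic since $\log|g_{\omega_0}'|$ is a bounded continuous function on the compact space $M$, so in fact $x\mapsto\log|f_{\omega_0}'(x)|$ is continuous and bounded, giving $\int\log|f_{\omega_0}'|\,d\mu_{X_n}\to\int\log|f_{\omega_0}'|\,d\mu$ outright. The subtlety is that the fiber exponent of $\mu_{X_n}$ as a $T$-invariant measure equals $\frac{1}{\pi_n}\log|g_n'(q_n)|$, which is negative but whose magnitude may shrink to $0$ (this is precisely the point of the GIKN construction); nonetheless the limit of these averages is exactly $\int\log|f_{\omega_0}'|\,d\mu=\lambda^c(\mu)$, and being a limit of non-positive numbers it is $\le 0$. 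Once $\lambda^c(\mu)\le 0$ is established, Ruelle's inequality closes the argument. I would present the Abramov--Rokhlin reduction first, then record the continuity of $\pi_\star$, then the fiber-exponent computation and limit, and finally invoke Ruelle's inequality.
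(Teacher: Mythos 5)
Your reduction of the problem to ``base entropy plus fiber contribution'' matches the paper's first step, and your verification that $\pi_\star\mu$ coincides with the zero-entropy ergodic limit $\nu$ of Theorem~\ref{main.entropiadesabadoatarde} (via weak-$\star$ continuity of $\pi_\star$) is exactly the paper's argument. The divergence, and the gap, lies in how you kill the fiber term. You bound the conditional fiber entropy by $\int(\lambda^c)^{+}\,d\mu$ via a relativized Ruelle inequality and then argue $\lambda^c(\mu)\le 0$ by passing the negative periodic-orbit exponents $\frac{1}{\pi_n}\log|g_n'(q_n)|$ to the limit. That limit computation is fine ($\log|f_{\omega_0}'|$ is continuous and bounded on $M$), but it only yields $\int\lambda^c\,d\mu\le 0$. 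The corollary does not assume $\mu$ ergodic --- the paper explicitly states it cannot decide whether measures with periodic repetitive pattern are ergodic --- and for a non-ergodic $\mu$ the pointwise exponent $\lambda^c(\mu,x)$ may be positive on a set of positive measure while its average is non-positive, in which case $\int(\lambda^c)^{+}\,d\mu>0$ and Ruelle's inequality gives no conclusion. As written, your argument covers only the ergodic case.

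The repair is both simpler and is what the paper does: the fiber contribution vanishes for purely topological reasons, with no Lyapunov exponents needed. Your aside that ``zero topological entropy is not available in general'' is where you went astray. Along each fiber $\pi^{-1}(\xi)$ the dynamics is a non-autonomous composition of circle homeomorphisms, and the Bowen entropy $h(T,\pi^{-1}(\xi))$ is zero because an $(n,\eps)$-spanning set of the fiber need only grow linearly in $n$: the $T^n$-image of a $(n,\eps)$-spanning set $K$ cuts the image circle into at most $\#K$ arcs, and adding at most $\lfloor 1/\eps\rfloor+1$ points makes the image $\eps$-dense again, producing a $(n+1,\eps)$-spanning set. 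Plugging $h(T,\pi^{-1}(\xi))=0$ into the Ledrappier--Walters inequality $h_\mu(T)\le h_\nu(\sigma)+\int h(T,\pi^{-1}(\xi))\,d\nu(\xi)$ finishes the proof for an arbitrary invariant $\mu$, ergodic or not, and avoids invoking the relativized Ruelle inequality for $C^1$ fiber maps altogether.
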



Concerning disintegration, which is the main theme of this paper, we have the following.  

\begin{maintheorem}
	\label{t.giknmeasures}
Let $\mu$ be a probability measure with periodic repetitive pattern.  Then, $\mu$ has atoms in its disintegration. In particular, if $\mu$ is ergodic then its disintegration along the fibers of $M$ is atomic.
\end{maintheorem}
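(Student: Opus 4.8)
The plan is to show that the point $q = \bigcap_n J_n$ produces an atom in the conditional measure $\mu_\xi$ for $\nu$-a.e.\ $\xi$, by exploiting the fact that the periodic orbits $X_n$ all pass through the nested intervals $J_n$ along a common finite word and spend a definite proportion of their period doing so. First I would make precise the ``shadowing'' that is already sketched informally after Definition~\ref{def.gikn}: because $g_n = T_{R_n} \circ g_{n-1}^{k_n}$ and $g_{n-1}$ contracts $J_{n-1}$, the orbit of $q_n$ under $T$ spends at least $k_n \pi_{n-1}$ of its $\pi_n = R_n + k_n \pi_{n-1}$ iterates following the word $\omega_1^{n-1}\dots\omega_{\pi_{n-1}}^{n-1}$, and during that time the fiber coordinate stays inside $J_{n-1}$. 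By condition~\eqref{key} the ``bad'' proportion $R_n/\pi_n \le R_n/(k_n\pi_{n-1}) = \lambda_n$ is summable, so iterating this estimate down the tower one gets: for each fixed $m$, the fraction of $\ell \in \{0,\dots,\pi_n-1\}$ for which $F^\ell(x_n)$ lies in a fixed neighborhood $\Sigma_k \times J_m$ is at least $1 - \sum_{j > m}\lambda_j - o_n(1)$, which tends to $1$ as $m \to \infty$.

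Next I would transfer this to the limit measure $\mu$. Since $\mu_{X_n} \to \mu$ and $\Sigma_k \times J_m$ can be taken with $\mu$-null boundary (shrink $J_m$ slightly; note the intervals are nested and $|J_m|\to 0$), the Portmanteau theorem gives $\mu(\Sigma_k \times J_m) \ge 1 - \sum_{j>m}\lambda_j$ for every $m$. Taking $m \to \infty$ this shows $\mu(\{(\xi,q) : \xi \in \Sigma_k\}) = \mu\left(\bigcap_m \Sigma_k \times J_m\right) \ge 1$, i.e.\ $\mu$ is supported on the single horizontal section $\Sigma_k \times \{q\}$ — wait, this would be too strong in general, so I would instead argue at the level of the $T$-orbit of this section. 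The correct statement is that $\mu$ gives positive mass (indeed here full mass in the limit of the estimate, but for robustness I only need positive mass) to $\bigcup_{j \ge 0} T^j(\Sigma_k \times \{q\})$, which is a countable union of horizontal sections and hence meets each fiber $\{\xi\}\times\mathbb S^1$ in a countable (in fact at-most-countable) set. More carefully: disintegrating, $\mu(\Sigma_k \times J_m) = \int \mu_\xi(\{\xi\}\times J_m)\,d\nu(\xi)$, and letting $m\to\infty$ with monotone convergence yields $\int \mu_\xi(\{(\xi,q)\})\,d\nu(\xi) \ge \liminf_m \mu(\Sigma_k\times J_m) > 0$, so $\mu_\xi(\{(\xi,q)\}) > 0$ on a set $W$ of positive $\nu$-measure. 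Hence $\mu$ has atoms in its disintegration.

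For the final clause, if $\mu$ is ergodic then Remark~\ref{rem.ergodiccase} immediately upgrades ``has atoms in its disintegration'' to ``atomic disintegration'': the set of $(\xi,x)$ which are atoms of $\mu_\xi$ is a $T$-invariant set of positive $\mu$-measure, hence of full $\mu$-measure, so $\mu_\xi$ is atomic for $\nu$-a.e.\ $\xi$.

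The main obstacle I anticipate is the first step — making the shadowing estimate uniform and quantitative. One must track, simultaneously for all levels $j \le m$, how many of the $\pi_n$ symbols of $\xi_n$ reproduce the block $\omega_1^{j}\dots\omega_{\pi_j}^{j}$ with the fiber coordinate trapped in $J_j$, and control the overlap/bookkeeping between consecutive levels so that the wasted iterates really do add up to $\sum_{j} \lambda_j$ and no more. Condition~\eqref{ojotazero} (disjointness of the $J_0$-translates) should be what guarantees these occurrences are ``honest'' returns and not accidental coincidences, and condition~\eqref{apalavra} ($|g_n'|_{J_n}| < 1$) keeps the fiber coordinate from escaping $J_j$ once it enters. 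A secondary technical point is ensuring the sets $\Sigma_k \times J_m$ (or their slight enlargements) are $\mu$-continuity sets so that weak-$*$ convergence applies cleanly; this is handled by choosing the radii of the $J_m$ outside an at most countable bad set.
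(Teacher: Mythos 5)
There is a genuine gap, and it sits at the heart of your first step. You claim that during the $k_n\pi_{n-1}$ iterates in which the orbit of $x_n$ follows the word $\omega^{n-1}$, ``the fiber coordinate stays inside $J_{n-1}$,'' and you deduce that the fraction of $\ell\in\{0,\dots,\pi_n-1\}$ with $F^\ell(x_n)\in\Sigma_k\times J_m$ tends to $1$. This is false. Condition~\eqref{ojotazero} — which you invoke — says precisely that $J_0$ and its images $f_{\omega^0_\ell}\circ\dots\circ f_{\omega^0_1}(J_0)$, $\ell=1,\dots,\pi_0-1$, are pairwise disjoint: during each block the fiber coordinate leaves $J_0$ and wanders through those disjoint images, returning to $J_0$ only once per $\pi_0$ steps, and returning to $J_m$ only when the whole composition accumulated so far has the form $g_m^{\ell_{m+1}}\circ\dots\circ g_{n-1}^{\ell_n}$ (no leftover $g_0,\dots,g_{m-1}$ factors and no partial $\omega^0$-block). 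The proportion of such times is of order $1/\pi_m$, not $1-\sum_{j>m}\lambda_j$. Consequently $\mu(\Sigma_k\times J_m)\to 0$ as $m\to\infty$, the ``More carefully'' inequality $\int\mu_\xi(\{(\xi,q)\})\,d\nu\geq\liminf_m\mu(\Sigma_k\times J_m)>0$ fails, and the single point $q$ need not be an atom of any $\mu_\xi$. You actually sensed the problem (``this would be too strong'') and pointed at the right object — the countable set of images of $q$ under the relevant compositions — but the patch is never carried out: the positive-mass estimate you supply is still for $\Sigma_k\times\{q\}$, not for that countable union, and a countable union of graphs is neither open nor closed, so weak-$*$ convergence cannot be applied to it directly.

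The paper's proof repairs exactly this. Instead of $\Sigma_k\times J_m$ it uses the horizontal strips $\Sigma_k\times A_n$, where $A_n$ is the \emph{finite union of all images} $f_{\omega^0_j}\circ\dots\circ f_{\omega^0_1}\circ g_0^{\ell_1}\circ\dots\circ g_{n-1}^{\ell_n}(J')$ of a suitable interval $J'$ with $J_{n+1}\subset J'\subset J_n$ and with $\mu(\partial(\Sigma_k\times A_n))=0$. The statistical content (Proposition~\ref{l.iterative}) is an induction on $n$ showing $\mu_{X_n}(\Sigma_k\times A_n)\geq\rho_n$ with $\rho_{n+1}=\rho_n/(1+\lambda_{n+1})$, so $\rho_n\downarrow\rho\geq e^{-\lambda}>0$ by the summability hypothesis~\eqref{key}; here each block contributes its visits to \emph{some} image of $J'$, not to $J'$ itself. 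Then $\cU=\bigcap_n\Sigma_k\times A_n$ satisfies $\mu(\cU)\geq\rho$, and $\cU$ meets each fiber in the countable set of images of $q$, whence $\mu_\xi$ has an atom for $\xi$ in a positive $\nu$-measure set. Your treatment of the ergodic upgrade via Remark~\ref{rem.ergodiccase} is fine, but without the $A_n$-bookkeeping (or an equivalent device) the main positive-mass-on-a-countable-set step is not established.
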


As we discussed above, the GIKN measures \cite{GorIlyKleNal:05} are ergodic measures with periodic repetitive pattern. We obtain thus:

\begin{cor}
The GIKN measures have atomic disintegration along the fibers.
\end{cor}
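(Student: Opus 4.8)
The plan is to prove that $\mu$ charges the set $\mathcal{A}:=\{(\xi,x)\in M:\mu_\xi(\{x\})>0\}$. This set is $T$-invariant: since $\mu_{\sigma\xi}=(f_{\xi_0})_\star\mu_\xi$ one has $\mu_{T(\xi,x)}(\{f_{\xi_0}(x)\})=\mu_\xi(\{x\})$, so $(\xi,x)\in\mathcal A\iff T(\xi,x)\in\mathcal A$. Hence, once $\mu(\mathcal A)>0$ is established, the ergodic case of the statement is immediate from Remark~\ref{rem.ergodiccase} (then $\mu(\mathcal A)=1$, so $\mu_\xi$ is purely atomic for $\nu$-a.e.\ $\xi$), and the general case follows from the invariance of $\mathcal A$ together with the ergodic decomposition of $\mu$ (a positive-measure set of ergodic components then has atomic disintegration, which yields the notion of atoms in the disintegration of the definition).

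\textbf{A positive-mass tube over the base.} Let $W_0=\omega^0_1\cdots\omega^0_{\pi_0}$, let $[W_0]\subset\Sigma_k$ be the cylinder of sequences whose coordinates $0,\dots,\pi_0-1$ spell $W_0$, and set $\mathcal D:=[W_0]\times\overline{J_0}$. First I would record the elementary combinatorics: since $\pi_n=k_n\pi_{n-1}+R_n=k_n\pi_{n-1}(1+\lambda_n)$, putting $P_n:=\prod_{j\le n}k_j$ one gets $\pi_n=\pi_0P_n\prod_{j\le n}(1+\lambda_j)$, and condition~\eqref{key} makes the product converge to some $L<\infty$, so $P_n/\pi_n\to 1/(L\pi_0)=:\delta>0$. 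The word $W_N$ generating $\xi_N$ contains exactly $P_N$ genuine copies of $W_0$ (the blocks produced by the recursion $g_n=T_{R_n}\circ g_{n-1}^{k_n}$); at the starting time $\ell$ of such a copy one has $\sigma^\ell\xi_N\in[W_0]$ and $f^\ell_{\xi_N}(q_N)\in\overline{J_0}$, the latter by downward induction on the nesting level from $q_N\in J_N\subset\cdots\subset J_0$ and from each $g_n$ mapping $\overline{J_n}$ into itself (a feature of the construction, cf.~\eqref{jn},~\eqref{apalavra}). Thus $\mu_{X_N}(\mathcal D)\ge P_N/\pi_N$, and since $\mathcal D$ is closed, $\mu(\mathcal D)\ge\limsup_N\mu_{X_N}(\mathcal D)\ge\delta>0$.

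\textbf{Atomicity of the conditionals over $\mathcal D$.} Along $X_N$, the starting time of a $W_0$-copy carries a well-defined \emph{hierarchical address} $a=(a_0,a_1,\dots)$ with $a_n\in\{0,\dots,k_{n+1}-1\}$ recording the position of the level-$n$ block (around that time) inside its level-$(n+1)$ block, and the corresponding fiber coordinate equals $g_0^{a_0}\circ g_1^{a_1}\circ\cdots\circ g_{m-1}^{a_{m-1}}(q_N)$, hence lies in $I_m^{(a)}:=g_0^{a_0}\circ\cdots\circ g_{m-1}^{a_{m-1}}(\overline{J_m})$ for every $m\le N$. Since each $g_i$ contracts $\overline{J_i}\supseteq\overline{J_{i+1}}$ and maps it into itself, the $I_m^{(a)}$ are nested with $\operatorname{diam}I_m^{(a)}\le|J_m|\to 0$, so the fiber coordinate is a function of the infinite address alone, namely the single point $y_a:=\bigcap_m I_m^{(a)}$. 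Letting $N\to\infty$: for $\nu$-a.e.\ $\xi\in[W_0]$, longer and longer windows of $\xi$ pin down more and more coordinates of the address, so all fiber positions accumulating over $\xi$ collapse to one point $\phi(\xi)$ and $\mu_\xi|_{\overline{J_0}}=\mu_\xi(\overline{J_0})\,\delta_{\phi(\xi)}$. In particular $\mathcal D\setminus\mathcal A$ is $\mu$-null, so $\mu(\mathcal A)\ge\mu(\mathcal D)\ge\delta>0$ and the reduction step concludes.

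\textbf{Main obstacle.} The crux is the last step: one must show that a $\nu$-typical $\xi$ admits a well-defined address, i.e.\ that the hierarchical block structure is \emph{recognizable} from $\xi$ and that the visible occurrences of $W_0$ in a typical $\xi$ are the genuine ones. This is where the disjointness hypothesis~\eqref{ojotazero} is used — a $W_0$-pattern in the base forces the fiber into the tower over $J_0$, so no spurious collapses occur — and where the zero entropy of $\nu$ from Theorem~\ref{main.entropiadesabadoatarde} enters, guaranteeing that the past of $\xi$ determines, for $\nu$-a.e.\ $\xi$, the finite truncations of the address and hence $\phi(\xi)$; some care with the accidental occurrences of $W_0$ inside the inserted words $T_{R_n}$ will also be needed. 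A softer, secondary point is promoting $\mu(\mathcal A)>0$ to the definitional notion of "atoms in the disintegration'' when $\mu$ is not assumed ergodic, which is handled via the ergodic decomposition and the $T$-invariance of $\mathcal A$.
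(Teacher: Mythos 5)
Your first two steps are sound and, in fact, your estimate $P_N/\pi_N\to\delta>0$ is exactly the statistical core of the paper's argument (Lemma~\ref{l.fatalitylemma} and Proposition~\ref{l.iterative}, where $\rho_n=\prod_{j\le n}(1+\lambda_j)^{-1}$ plays the role of your $\pi_0 P_n/\pi_n$). The genuine gap is your third step, and you have correctly diagnosed it yourself in the ``main obstacle'' paragraph: the set you put positive mass on, $\mathcal D=[W_0]\times\overline{J_0}$, is a \emph{fixed} tube whose fiber sections are whole intervals, so concluding atomicity of $\mu_\xi$ on $\overline{J_0}$ requires the recognizability/address argument, which you do not supply. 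Neither the zero entropy of $\nu$ nor condition~\eqref{ojotazero} delivers this for free: weak-$\star$ convergence $\mu_{X_N}\to\mu$ gives no control on the convergence of the disintegrations, so the fact that the orbit points of $X_N$ lying over a given base window are organized by a hierarchical address does not transfer to a statement about $\mu_\xi$ for $\nu$-a.e.\ $\xi$. As written, the claim $\mu_\xi|_{\overline{J_0}}=\mu_\xi(\overline{J_0})\,\delta_{\phi(\xi)}$ is unproved, and it is also stronger than needed.

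The paper's device that closes exactly this gap is to replace your fixed tube by the \emph{decreasing} sets $\Sigma_k\times A_n$, where $A_n$ is the finite union of intervals $g_0^{\ell_1}\circ\cdots\circ g_{n-1}^{\ell_n}(J')$ (together with their images under the initial word); these are the fiber locations of your ``addresses truncated at level $n$''. The uniform bound $\mu_{X_m}(\Sigma_k\times A_n)\ge\rho$ for all $m\ge n$ passes to the limit measure for each fixed $n$ (choosing $J'$ with $\mu(\partial(\Sigma_k\times A_n))=0$, Lemma~\ref{l.preparandooterreno}), hence $\mu(\cU)\ge\rho$ for $\cU=\bigcap_n(\Sigma_k\times A_n)$. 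The point is that $\cU$ meets each fiber $\{\xi\}\times\mathbb S^1$ in a \emph{countable} set (the images of the single point $q=\bigcap_n J_n$ under the countably many finite compositions), so $\int\mu_\xi(\cU)\,d\nu\ge\rho$ forces $\mu_\xi$ to have an atom on a positive-$\nu$-measure set of $\xi$ — no identification of a well-defined address for a.e.\ $\xi$, and no recognizability of the block structure, is needed. Ergodicity (which the GIKN measures have) then upgrades ``has atoms'' to ``atomic disintegration'' via Remark~\ref{rem.ergodiccase}, exactly as in your reduction step. I would encourage you to rework your step 3 along these lines: intersect over all levels \emph{before} disintegrating, rather than disintegrating over a single level and then trying to refine.
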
 

\section{Entropy of measures with periodic repetitive pattern}\label{sec.entropy}

Our goal in this section is to prove that measures with periodic repetitive pattern have always zero entropy. Our strategy is to prove that the projection of the measure on the base has zero entropy, and then to use Leddrapier-Walters' formula \cite{ledrappierwalters}. The proof that the projected measure has zero entropy uses \cite{dominik2017feldman}. For this, we begin this section recalling the notion of Feldman-Katok convergence of measures, introduced in \cite{dominik2017feldman}.

\subsection{The Feldman-Katok pseudometric}

We shall recall briefly this concept here. For more details the reader may consult \cite{dominik2017feldman}. 

Let $(X,d)$ be a compact metric space and denote $X^{\infty}\eqdef X^{\N}$. The \emph{Feldman-Katok peseudometric} is a pseudo-metric on the space $X^{\infty}$. Let us present its definition. Fix $x,y\in X^{\infty}$. Given $n\in\N$, a $(n,\delta)$-\emph{match between $x$ and $y$}, or simply a $(n,\delta)$-\emph{match} if there is no risk of confusion, is an order preserving bijection $p:\cD\to\cR$, with $\cD,\cR\subset\{0,\dots,n-1\}$ which identifies points on the sequences $x$ and $y$ that are $\delta$-close, i.e. if $i\in\cD$ then $d(x_i,y_{p(i)})<\delta$. The \emph{fit size} of a $(n,\delta)$-match is the cardinality $\#\cD$ of its domain.   

\begin{defi}[$(n,\delta)$-gap function]
Given $x,y\in X^{\infty}$ the $(n,\delta)$-\emph{gap between $x$ and $y$} is the number
\[
\overline{f}_{n,\delta}(x,y)\eqdef1-\frac{\max\{\#D;p:\cD\to\cR\:\textrm{is a}\:(n,\delta)-\textrm{match}\}}{n}.
\]
\end{defi}

The $(n,\delta)$-gap function measures the proportion of points in the two sequences $x$ and $y$ which are far apart within a certain scale, up to time $n$. Intuitively, we see that if two sequences satisfy $\lim_{n\to\infty}\overline{f}_{n,\delta}(x,y)=0$, for every $\delta$, then the ergodic behaviour of the sequences should be indistinguishable. Therefore, in some sense, analysing the asymptotic behaviour of the numerical sequence $\overline{f}_{n,\delta}(x,y)$ should give information on how distinct from the ergodic point of view the two sequences are. This motivates considering the number 
\[
\overline{f}_{\delta}(x,y)\eqdef\limsup_{n\to\infty}\overline{f}_{n,\delta}(x,y),
\]
which is called the $\overline{f}_{\delta}$-\emph{pseudo-distance}. We are finally in position to present the Feldman-Katok pseudometric.
\begin{defi}[The Feldman-Katok pseudometric]
The Feldman-Katok pseudometric is the function defined by 
\[
\overline{F}_K(x,y)\eqdef\inf\{\delta>0;\overline{f}_{\delta}<\delta\}.
\]
\end{defi}

We shall need the lemma below, whose proof we refer to \cite{dominik2017feldman}, page 9.

\begin{lemma}
\label{l.dominik}
\begin{enumerate}
	\item[(a)] If $x,y\in X^{\infty}$ are periodic sequences with a common period $N$ then $$\overline{f}_{\delta}(x,y)=\lim_{m\to\infty}\overline{f}_{mN,\delta}(x,y).$$
	\item[(b)] If, for some $\delta,\eps>0$, one has $\overline{f}_{\delta}(x,y)\leq\eps$ then $\overline{F}_K(x,y)\leq\delta+\eps.$
\end{enumerate}
\end{lemma}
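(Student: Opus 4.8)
\textbf{Plan for Lemma~\ref{l.dominik}.} Since this lemma is explicitly quoted from \cite{dominik2017feldman}, the natural route is to reconstruct the two elementary arguments on which it rests. Both parts are purely combinatorial facts about the gap functions $\overline{f}_{n,\delta}$ and the matches that define them, so no dynamics enters.

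\textbf{Part (a).} Suppose $x,y\in X^{\infty}$ are periodic of common period $N$. The claim is that the $\limsup$ defining $\overline{f}_{\delta}$ is actually a limit along the arithmetic progression $n=mN$. First I would record the key monotone/subadditive behaviour of the maximal fit size. Write $M(n)$ for the maximal fit size of an $(n,\delta)$-match between $x$ and $y$, so $\overline{f}_{n,\delta}=1-M(n)/n$. Concatenating an optimal $(mN,\delta)$-match on the block $\{0,\dots,mN-1\}$ with a translated optimal $(m'N,\delta)$-match on the block $\{mN,\dots,(m+m')N-1\}$ — which is legitimate precisely because both sequences are $N$-periodic, so the second block is an exact copy of a prefix — produces an order-preserving partial bijection on $\{0,\dots,(m+m')N-1\}$ identifying $\delta$-close points, i.e. $M((m+m')N)\geq M(mN)+M(m'N)$. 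Hence $a_m\eqdef M(mN)$ is superadditive, so $M(mN)/(mN)\to\sup_m M(mN)/(mN)$ exists by Fekete's lemma, and therefore $\lim_{m\to\infty}\overline{f}_{mN,\delta}(x,y)$ exists. It remains to show this limit equals the full $\limsup$ over all $n$. For arbitrary $n$, write $n=mN+r$ with $0\le r<N$; restricting an optimal $(n,\delta)$-match to the first $mN$ coordinates loses at most $r<N$ matched points, so $M(n)\le M(mN)+N$, giving $\overline{f}_{n,\delta}(x,y)\ge 1-\frac{M(mN)+N}{mN}$, and letting $n\to\infty$ along any subsequence forces the $\limsup$ to be $\le\lim_m\overline{f}_{mN,\delta}$; the reverse inequality is immediate since $\{mN\}$ is a subsequence of $\N$. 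This pins down the value.

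\textbf{Part (b).} Here I would argue directly from the definition $\overline{F}_K(x,y)=\inf\{\delta'>0:\overline{f}_{\delta'}(x,y)<\delta'\}$. Assume $\overline{f}_{\delta}(x,y)\le\eps$ for the given $\delta,\eps>0$. The only subtlety is that $\overline{F}_K$ is defined using a \emph{strict} inequality $\overline{f}_{\delta'}<\delta'$ and possibly a strictly larger scale $\delta'$ than $\delta$; the point is that enlarging the matching scale can only increase the set of $\delta'$-close pairs, hence can only increase the maximal fit size, hence $\overline{f}_{\delta'}\le\overline{f}_{\delta}$ is monotone nonincreasing in $\delta'$. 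So for every $\delta'>\delta$ we have $\overline{f}_{\delta'}(x,y)\le\overline{f}_{\delta}(x,y)\le\eps<\delta'$ as soon as $\delta'>\eps$; in particular taking $\delta'=\delta+\eps+\theta$ for any $\theta>0$ gives $\overline{f}_{\delta'}<\delta'$, so $\overline{F}_K(x,y)\le\delta+\eps+\theta$, and letting $\theta\downarrow0$ yields $\overline{F}_K(x,y)\le\delta+\eps$.

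\textbf{Main obstacle.} The calculations are routine; the one place that needs care is the concatenation step in part (a) — one must check that gluing two order-preserving matches on adjacent blocks really yields an admissible $(n,\delta)$-match (order preservation across the seam, disjointness of domains and ranges), and this is exactly where periodicity is used to guarantee that the translated block is a genuine copy so that $\delta$-closeness is preserved. I would also double-check the monotonicity direction of $\delta'\mapsto\overline{f}_{\delta'}$ in part (b): a larger $\delta'$ relaxes the constraint $d(x_i,y_{p(i)})<\delta'$, admitting more matches, so $M$ goes up and $\overline{f}$ goes down, which is the direction the argument needs. Beyond that everything is immediate.
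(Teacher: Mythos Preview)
The paper does not prove this lemma; it simply refers the reader to \cite{dominik2017feldman}, page 9. So there is no in-paper argument to compare against, and your task reduces to supplying a self-contained justification, which you essentially do.

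Your argument for part (b) is correct as written: monotonicity of $\delta'\mapsto\overline{f}_{\delta'}$ plus the strict inequality in the definition of $\overline{F}_K$ is exactly what is needed. For part (a), the superadditivity/Fekete step is fine and periodicity is invoked exactly where it should be, at the concatenation seam. There is, however, a direction slip in the step comparing a general $n$ with the nearest multiple $mN$: from $M(n)\le M(mN)+cN$ you derive a \emph{lower} bound $\overline{f}_{n,\delta}\ge 1-\tfrac{M(mN)+cN}{mN}$, and then assert this forces $\limsup_n\overline{f}_{n,\delta}\le\lim_m\overline{f}_{mN,\delta}$. A uniform lower bound on $\overline{f}_{n,\delta}$ cannot give an upper bound on the $\limsup$. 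The clean fix is to run the inclusion the other way: any optimal $(mN,\delta)$-match is already an $(n,\delta)$-match (its domain and range lie in $\{0,\dots,mN-1\}\subset\{0,\dots,n-1\}$), so $M(n)\ge M(mN)$ and hence
\[
\overline{f}_{n,\delta}\le 1-\frac{M(mN)}{n}\le 1-\frac{M(mN)}{(m+1)N}\longrightarrow \lim_{m\to\infty}\overline{f}_{mN,\delta}.
\]
Together with the trivial inequality $\limsup_n\ge\lim_m$ (subsequence), this yields the stated equality. As a minor aside, in your restriction estimate the number of matched pairs lost is at most $2r$ rather than $r$, since a pair is discarded if \emph{either} its domain or its range index lies in $\{mN,\dots,n-1\}$; this does not affect anything.
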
    

\subsection{$\overline{F}_K$ Cauchy sequences of orbits}

Now, we assume that there exists a discrete time dynamical system $T:X\to X$ acting on the space $X$. Notice that the orbit of a point $x_0\in X$ determines a point in $x\in X^{\infty}$. We recall now the notion of a Cauchy sequence in the space of sequences.

\begin{defi}[$\overline{F}_K$ Cauchy sequence]
We say that a sequence $x_n\in X^{\infty}$ is $\overline{F}_K$-Cauchy if for every $\eps>0$ there exists $n_0\in\N$ such that $m,n\geq n_0$ implies $\overline{F}_K(x_n,x_m)<\eps$. 
\end{defi}

We state below a result taken from \cite{dominik2017feldman} in a form suitable for our purposes. We remark that stronger statements are obtained in \cite{dominik2017feldman}, but as we will not use their entire force we prefer to remain in a simper context. 

The setting is the following. Recall that $\mathcal{P}(X)$ denotes the space of probabilities measures over $X$ endowed with the weak$^{\star}$ topology. Consider $\{\xi_n\}\subset X$ a sequence of $T$-periodic points, with period $\pi_n$, and let $x_n\in X^{\infty}$, for each $n$, denote the orbit $O_T(\xi_n)$ in the space of sequences. Then, $x_n$ is a generic sequence for the measure
\[
\nu_n=\frac{1}{\pi_n}\sum_{\ell=0}^{\pi_n-1}\delta_{T^{\ell}(\xi_n)}.
\]
Recall that a sequence $x\in X^{\infty}$ is said to be a generic sequence for some measure $\mu\in\mathcal{P}(X)$ if the sequence of measures $1/m\sum_{\ell=0}^{m-1}\delta_{x_\ell}$ converges to $\mu$ in the space $\mathcal{P}(X)$.
 
The next theorem is a compilation of several statements from \cite{dominik2017feldman}, applied to the particular case we are interested. 

\begin{teo}
\label{t.dominikcauchy}
If the sequence of periodic orbits $x_n\in X^{\infty}$ is $\overline{F}_K$-Cauchy then there exists a probability measure $\nu$ over $X$ such that $\nu_n\to\nu$ in $\mathcal{P}(X)$. Moreover, $\nu$ is an ergodic measure satisfying $h_{\nu}(T)=0$. 
\end{teo}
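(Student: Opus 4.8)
The plan is to read off all three assertions from the Feldman--Katok machinery of \cite{dominik2017feldman}; as indicated in the text, the statement only repackages several of their results, so the work consists of matching our hypotheses to theirs plus a couple of soft verifications on our side.

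\textbf{Convergence of $\nu_n$.} First I would show that $\overline{F}_K$ dominates a metric inducing the weak$^\star$ topology on $\cP(X)$ when evaluated on orbit sequences. Fix $n,m$ and any $\delta>\overline{F}_K(x_n,x_m)$, so that $\overline{f}_{\delta}(x_n,x_m)<\delta$. Passing to blocks of a common period $N=\mathrm{lcm}(\pi_n,\pi_m)$ and applying Lemma~\ref{l.dominik}(a), for all large $r$ there is an $(rN,\delta)$-match $p\colon\cD\to\cR$ with $\#\cD\geq(1-\delta)rN$. After rescaling so that $d\leq 1$ on $X$ and extending $p$ to an arbitrary bijection between the leftover indices, the atomic measure supported on the graph of $p$ is a coupling of the two length-$rN$ empirical distributions that moves all but a $\delta$-fraction of the mass by less than $\delta$; hence those empirical distributions lie within $2\delta$ of one another in the Kantorovich--Wasserstein metric of $d$, which induces the weak$^\star$ topology on $\cP(X)$. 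Letting $r\to\infty$ and using that the orbit of a $\pi_n$-periodic point is generic for $\nu_n$ (exactly, along multiples of $\pi_n$), I get that the distance between $\nu_n$ and $\nu_m$ is at most $2\overline{F}_K(x_n,x_m)$. Thus $(\nu_n)$ is Cauchy for a complete metric compatible with the weak$^\star$ topology, so $\nu_n\to\nu$ for some $\nu\in\cP(X)$, and $\nu$ is $T$-invariant as a weak$^\star$ limit of $T$-invariant measures.

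\textbf{Ergodicity of $\nu$.} This is the heart of the matter, and here I would invoke the corresponding theorem of \cite{dominik2017feldman} directly: in the topological category $\overline{F}_K$ plays the role that Ornstein's $\overline{d}$-pseudometric plays for Bernoulli processes, and in particular a weak$^\star$ limit of an $\overline{F}_K$-Cauchy sequence of ergodic measures is again ergodic --- in sharp contrast with plain weak$^\star$ limits of ergodic measures, which are typically far from ergodic. The mechanism is that each $\nu_n$, being carried by a single periodic orbit, is ergodic, and the $\overline{F}_K$-Cauchy condition forces the generic orbits of the $\nu_n$ to become asymptotically $\overline{F}_K$-coherent; since two distinct ergodic components of a single invariant measure sit at positive $\overline{F}_K$-distance, the ergodic decomposition of $\nu$ cannot split, i.e.\ $\nu$ is ergodic. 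The only thing to check on our side is that our periodic sequences $x_n$ are generic points for $\nu_n$ in the sense used there, which is immediate.

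\textbf{Vanishing entropy, and the main obstacle.} Finally, $h_\nu(T)=0$ follows from the (semi)continuity of the Kolmogorov--Sinai entropy along $\overline{F}_K$-Cauchy sequences established in \cite{dominik2017feldman}, giving $h_\nu(T)\leq\liminf_n h_{\nu_n}(T)=0$, since a measure supported on a periodic orbit has zero entropy. In summary, the only computational piece is the coupling estimate of the first step and the genericity verifications are routine; the genuine content --- that ergodicity and zero entropy \emph{survive} the limit, which they would not under mere weak$^\star$ convergence --- is exactly what \cite{dominik2017feldman} supplies, so the proof amounts to a precise citation of their statements. Were one to reprove the theorem from scratch, the principal obstacle would be establishing the preservation of ergodicity under $\overline{F}_K$-limits.
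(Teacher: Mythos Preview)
Your proposal is correct and follows essentially the same strategy as the paper: both arguments reduce the substantive claims---preservation of ergodicity and lower semicontinuity of entropy under $\overline{F}_K$-limits---to citations of \cite{dominik2017feldman}, applied to periodic-orbit measures which are trivially ergodic with zero entropy. The one point of divergence is the convergence step $\nu_n\to\nu$: the paper invokes Lemma~28 and Theorem~33 of \cite{dominik2017feldman} to first produce an $\overline{F}_K$-limit sequence $x\in X^\infty$ and then conclude that $x$ is generic for some $\nu$ with $\nu_n\to\nu$, whereas you give a self-contained coupling/Wasserstein estimate showing directly that $W_1(\nu_n,\nu_m)\leq 2\,\overline{F}_K(x_n,x_m)$. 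Your argument is a bit more elementary on this point; on the other hand, the paper's route simultaneously furnishes a generic point for $\nu$, which is the form in which the ergodicity and entropy results (Theorems~40 and~41 there) are actually stated, so if you bypass Lemma~28/Theorem~33 you should check that those later theorems can be invoked in the measure-level formulation you use.
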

\begin{proof}
By Lemma 28 of \cite{dominik2017feldman} there exists a sequence $x\in X^{\infty}$ such that $\overline{F}_K(x_n,x)\to 0$. Since, for each $n$, the sequence $x_n\in X^{\infty}$ is a generic sequence for the measure $\nu_n$, Theorem 33 of \cite{dominik2017feldman} implies there exists a measure $\nu\in\mathcal{P}(X)$ so that $x$ is a generic sequence for $\nu$ and $\nu_n\to\nu$. This measure is ergodic by Theorem 40 of \cite{dominik2017feldman} and since $h_{\nu_n}(T)=0$, for every $n$ the \emph{lower semicontinuity} of entropy with respect to $\overline{F}_K$ convergence, established in Theorem 41 of \cite{dominik2017feldman} yields the zero entropy claim, concluding.  
\end{proof}

\subsection{The projection of a sequence of periodic orbits with repetitive pattern is $\overline{F}_K$ Cauchy}
We return now to our context of a locally constant skew product $T:M\to M$ over a shift of $k$-symbols on the base. Let $X_n=O\left((\xi_n,q_n)\right)$ be a sequence of periodic orbits with repetitive pattern. Then, projecting down on the base space $\Sigma_k$ we have a sequence of $\sigma$-periodic points $\xi_n$, generated by a finite word $\omega^n_1\dots\omega^n_{\pi_n}$ of symbols over the alphabet $\{1,\dots,k\}$, satisfying
\begin{enumerate}
	\item for every $n\geq 1$, there exists $k_n\geq 2$ and a finite word $\alpha_n=\alpha^n_1\dots\alpha^n_{R_n}$ of length $R_n\geq 1$ so that the finite word\footnote{Recall that when $\xi\in\Sigma_k$ is $\sigma$-periodic we make the abuse of notation of using $\xi$ also to denote the finite word $\omega_1\dots\omega_{\pi}$. In this notation, $\xi^k$ means the finite word obtaining from repeating the $\xi$ exactly $k$ times.} $\xi_n$ can be written
	\[
	\xi_n=\xi_{n-1}^{k_n}\alpha_n.
	\]
	\item The sequence of real numbers $\lambda_n=\frac{R_n}{k_n\pi_{n-1}}$ is summable.  
\end{enumerate}
\begin{remark}
	\label{rem.exponencial}
Since $k_n\geq 2$ and $\pi_{n}=\pi_{n-1}k_{n}+R_n$, for every $n$, we have that $\pi_n>2^n$. This is going to be used in the proof of Lemma~\ref{l.tardedesabado}.	
\end{remark}

We shall apply the Feldman-Katok pseudometric on the space $X^{\infty}$, where $X=\Sigma_k$, endowed with the standard metric $d(\xi,\hat{\xi})=2^{-\min\{|j|\in\N;\xi_j\neq\hat{\xi}_j\}}.$

As we mentioned above in a wider generality, the orbit of the point $\xi_n\in X$ under the shift map $\sigma:X\to X$ gives a point on the space $X^{\infty}$, which we will denote by $y_n$. The main result of this section, is the following estimation

\begin{lemma}
\label{l.tardedesabado}
$\overline{F}_K(y_n,y_{n+1})\leq\lambda_{n+1}+\frac{n+1}{2^n}$.
\end{lemma}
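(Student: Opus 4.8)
The plan is to combine the two parts of Lemma~\ref{l.dominik}. First, for a common period $N$ of the periodic sequences $y_n$ and $y_{n+1}$, I will exhibit explicit $(mN,2^{-n})$-matches of large fit size, which by part (a) bounds $\overline{f}_{2^{-n}}(y_n,y_{n+1})$; then part (b), applied with $\delta=2^{-n}$, converts this into the desired bound on $\overline{F}_K$. Throughout I use that two points of $\Sigma_k$ are at distance $<2^{-n}$ exactly when they agree on coordinates $-n,\dots,n$.

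I would take $N=\pi_n\pi_{n+1}$, so $\pi_n\mid N$ and $\pi_{n+1}\mid N$. The bi-infinite word underlying $y_{n+1}$ is the periodic concatenation $\cdots\xi_n^{k_{n+1}}\alpha_{n+1}\,\xi_n^{k_{n+1}}\alpha_{n+1}\cdots$ of ``good syllables'' (copies of $\xi_n$, each of length $\pi_n$) and ``bad syllables'' (copies of $\alpha_{n+1}$, each of length $R_{n+1}$). For $i\in\{0,\dots,mN-1\}$ write $i=b\pi_{n+1}+s$ with $0\le s<\pi_{n+1}$, and call $i$ \emph{good} when $s\in[n,\,k_{n+1}\pi_n-n)$; for such $i$ the coordinate window $-n,\dots,n$ of $\sigma^i(\xi_{n+1})$ lies strictly inside one run of $k_{n+1}$ good syllables, so $\sigma^i(\xi_{n+1})$ reads $(\xi_n)_{(s+c)\bmod\pi_n}$ at coordinate $c$ for each $|c|\le n$. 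For good $i$ set $p(i)=b\,k_{n+1}\pi_n+s$ — equivalently, $i$ with the total length $b\,R_{n+1}$ of all bad syllables preceding it deleted. One checks directly that on the good set $p$ is strictly increasing (within a block $p(i+1)=p(i)+1$, and across consecutive blocks the values still increase since $2n+1>0$), that $p$ takes values in $[0,\,m\pi_n k_{n+1}\pi_n)\subset[0,mN)$, and that $p(i)\equiv s\pmod{\pi_n}$. This last congruence is the whole point: $\sigma^{p(i)}(\xi_n)$ reads $(\xi_n)_{(p(i)+c)\bmod\pi_n}=(\xi_n)_{(s+c)\bmod\pi_n}$ at coordinate $c$, so $\sigma^i(\xi_{n+1})$ and $\sigma^{p(i)}(\xi_n)$ agree on coordinates $-n,\dots,n$ and hence are at distance $<2^{-n}$. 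Thus $p$ restricted to the good set is an $(mN,2^{-n})$-match; since each $\pi_{n+1}$-period contributes $k_{n+1}\pi_n-2n$ good indices and there are $m\pi_n$ of them in $[0,mN)$, its fit size is $m\pi_n(k_{n+1}\pi_n-2n)$, whence
\[
\overline{f}_{mN,2^{-n}}(y_{n+1},y_n)\ \le\ 1-\frac{k_{n+1}\pi_n-2n}{\pi_{n+1}}\ =\ \frac{R_{n+1}+2n}{\pi_{n+1}}
\]
for every $m$.

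By Lemma~\ref{l.dominik}(a) this gives $\overline{f}_{2^{-n}}(y_{n+1},y_n)\le\frac{R_{n+1}+2n}{\pi_{n+1}}$. Using $\pi_{n+1}=k_{n+1}\pi_n+R_{n+1}\ge k_{n+1}\pi_n$ and $\pi_{n+1}>2^{n+1}$ (Remark~\ref{rem.exponencial}) I bound $\frac{R_{n+1}}{\pi_{n+1}}\le\frac{R_{n+1}}{k_{n+1}\pi_n}=\lambda_{n+1}$ and $\frac{2n}{\pi_{n+1}}<\frac{2n}{2^{n+1}}=\frac{n}{2^n}$, so $\overline{f}_{2^{-n}}(y_{n+1},y_n)\le\lambda_{n+1}+\frac{n}{2^n}$. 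Then Lemma~\ref{l.dominik}(b) with $\delta=2^{-n}$ and $\eps=\lambda_{n+1}+\frac{n}{2^n}$ yields, by symmetry of $\overline{F}_K$,
\[
\overline{F}_K(y_n,y_{n+1})\ \le\ 2^{-n}+\lambda_{n+1}+\frac{n}{2^n}\ =\ \lambda_{n+1}+\frac{n+1}{2^n},
\]
which is the claim.

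The only genuinely non-routine step is the construction of $p$: one cannot use the identity on the good indices, because $\pi_n\nmid\pi_{n+1}$, so the $k_{n+1}$ in-block copies of $\xi_n$ drift out of phase with $\xi_n$ by $R_{n+1}$ after each bad syllable. The fix — reindexing by ``delete the $\alpha_{n+1}$-syllables'' — must be checked to be simultaneously order-preserving, valued in $[0,mN)$, and phase-correct (the congruence $p(i)\equiv s \bmod \pi_n$); that verification is the heart of the matter, and the remainder is bookkeeping with the periods together with the two cited lemmas.
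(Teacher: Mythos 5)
Your proposal is correct and follows essentially the same route as the paper: fix $\delta=2^{-n}$, take the common period $N=\pi_n\pi_{n+1}$, build a match of fit size $m\pi_n(k_{n+1}\pi_n-2n)$ by aligning the $k_{n+1}$-blocks of copies of $\xi_n$ inside $\xi_{n+1}$ with $\xi_n$ itself, and then apply both parts of Lemma~\ref{l.dominik} together with Remark~\ref{rem.exponencial}. The only difference is that you spell out the matching map $p$ and its order/phase verification explicitly (which the paper leaves implicit) and bound $R_{n+1}/\pi_{n+1}$ by $R_{n+1}/(k_{n+1}\pi_n)=\lambda_{n+1}$ rather than via the identity $R_{n+1}/\pi_{n+1}=\lambda_{n+1}/(1+\lambda_{n+1})$; both give the same conclusion.
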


Notice that Lemma~\ref{l.tardedesabado} implies that $x_n\in X^{\infty}$ is an $\overline{F}_K$-Cauchy sequence of orbits, and therefore Theorem~\ref{main.entropiadesabadoatarde} follows directly from Theorem~\ref{t.dominikcauchy}. 

\begin{proof}[Proof of Lemma~\ref{l.tardedesabado}]
We fix $\delta=2^{-n}$, aiming to apply Lemma~\ref{l.dominik}. Notice that our choice of $\delta$ depends on the points $y_n,y_{n+1}$ whose $\overline{F}_K$ distance we want to estimate. Also, recall from the definition of the distance function on the shift space $X=\Sigma_k$, two terms in the sequences $y_n,y_{n+1}$ are $\delta$-close if they correspond to sequences in $\Sigma_k$ which are equal from positions $-n$ to $n$.

Now let $N=\pi_n\pi_{n+1}$ be a common period for the sequences $y_n,y_{n+1}$, and $m\in\N$. Then, looking at $\xi_{n+1}$ as a finite word of length $mN$ over the alphabet $\{1,...,k\}$, we see that it is composed by $m\pi_n$ blocks of $k_{n+1}$ repetitions of the finite word $\xi_n=\omega^n_1\dots\omega^n_{\pi_n}$, and each block is followed by the word $\alpha^n_1\dots\alpha^n_{R_n}$. On the other hand, the word $\xi_n$ is simply a block of $m\pi_{n+1}$ repetitions of $\omega^n_1\dots\omega^n_{\pi_n}$.

Since $\pi_{n+1}>k_{n+1}\pi_n$ we see that in both sequences $\xi_n,\xi_{n+1}$ we have (at least) $mk_{n+1}\pi_{n}$ repetitions of the word $\omega^n_1\dots\omega^n_{\pi_n}$, and these repetitions come in blocks of $k_{n+1}$ repetitions. Along each one these blocks of we have at least $\pi_nk_{n+1}-2n$ positions in which the two sequences coincide from $n$ positions backwards to $n$ positions forward. 

Therefore, we can define a $(mN,\delta)$-match between $y_n$ and $y_{n+1}$ whose fit size is $m\pi_n(\pi_nk_{n+1}-2n)$. It follows that
\[
\overline{f}_{mN,\delta}(y_n,y_{n+1})\leq 1-\frac{m\pi_{n}(\pi_nk_{n+1}-2n)}{N}\leq\frac{R_{n+1}+2n}{\pi_{n+1}}.
\]
Now, by definition of $\lambda_{n+1}$, we have $\frac{R_{n+1}}{\pi_{n+1}}=\frac{\lambda_{n+1}}{1+\lambda_{n+1}}$ and by Remark~\ref{rem.exponencial}, $\frac{2n}{\pi_{n+1}}<\frac{n}{2^n}$. We deduce from (a) of Lemma~\ref{l.dominik} that
\[
\overline{f}_{\delta}(y_n,y_{n+1})\leq \lambda_{n+1}+\frac{n}{2^n}.
\] 
Since $\delta=2^{-n}$, it suffices now to apply (b) of Lemma~\ref{l.dominik} to conclude.
\end{proof}

\subsection{Proof of Corollary~\ref{cor.regis}}

Let $\mu\in\mathcal{P}_T(M)$ be a measure with periodic repetitive pattern. Consider $\pi:M\to\Sigma_k$ the projection map onto the first coordinate. By definition of $F$, we have $\pi\circ F=\sigma\circ\pi$. Then, $\nu=\pi_{\star}\mu$ is a $\sigma$-invariant measure. By Leddrapier-Walters formula \cite{ledrappierwalters}, we have that
\begin{equation}
\label{e.ledwal}
h_{\mu}(T)\leq h_{\nu}(\sigma)+\int_{\Sigma_k}h(T,\pi^{-1}(\xi))d\nu(\xi).
\end{equation}

Let $X_n=O\left((\xi_n,q_n)\right)$ be a sequence of periodic orbits with repetitive pattern generating $\mu$, i.e. such that the sequence of measures $\mu_{X_n}=1/\pi_n\sum_{\ell=0}^{\pi_n-1}\delta_{T^{\ell}(\xi_n,q_n)}$ converges to $\mu$ in $\cP(M)$. Notice that $\pi_{\star}\mu_{X_n}=\nu_n$, where $\nu_n=1/\pi_n\sum_{\ell=0}^{\pi_n-1}\delta_{\sigma^{\ell}(\xi_n)}$.By Theorem~\ref{main.entropiadesabadoatarde}, $\nu_n$ must converge to some ergodic $\sigma$-invariant, with zero entropy measure. On the other hand, since $\pi_{\star}:\cP_T(M)\to\cP_{\sigma}(\Sigma_k)$ is continuous with respect to the weak$^{\star}$ topologies on both spaces, one deduces that $\nu_n\to\nu$. Therefore, $h_{\nu}(\sigma)=0$. By \eqref{e.ledwal} it only remains to show the following
\begin{equation}
\label{e.entropianasfibras}
h(T,\pi^{-1}(\xi))=0,\:\:\textrm{for every}\:\:\xi\in\Sigma_k.
\end{equation}
This is the precise formulation of the fact the our skew product map $T$ carries no entropy in the fibers. This is somewhat folklore, but for the sake of completeness let us present a short proof. The goal is to show that for every $\eps>0$ small enough, the cardinality of an $(n,\eps)$-spanning set grows at most linearly with $n$, which directly implies \eqref{e.entropianasfibras}. The argument is essentially the same as the one in the proof of the well-known fact that circle homeomorphisms have zero topological entropy (see \cite{waltinho}, page 180 for the details).

Fix $n$ and let $K\subset\pi^{-1}(\xi)$ be a $(n,\eps)$-spanning set. We claim that adding no more than $\left\lfloor 1/\eps\right\rfloor+1$ points (where $\left\lfloor \right\rfloor$ denotes the integer part) one obtains a $(n+1,\eps)$-spanning set. Indeed, it suffices to consider the intervals of the fiber $\pi^{-1}(\sigma^n(\xi))$ whose endpoints are determined by the second coordinate of the points $T^n(x),T^n(y)$, with $x,y\in K$, and just add points to those intervals (if necessary) so that the new set is $\eps$-dense in the fiber. Taking the pre-image under $T^{n}$, one obtains no more than $\left\lfloor 1/\eps\right\rfloor+1$ on the initial fiber $\pi^{-1}(\xi)$. If $\eps>0$ is small enough, by continuity of $T$, the new set is $(n+1,\eps)$-spanning. Since the cardinality of a $(1,\eps)$-spanning set is clearly bounded by $\left\lfloor 1/\eps\right\rfloor+1$, this proves that the cardinality of a $(n,\eps)$-spanning set is bounded by $n(\left\lfloor 1/\eps\right\rfloor+1)$, which establishes \eqref{e.entropianasfibras}. This completes the proof of Corollary~\ref{cor.regis}.

\section{Proof of Theorem~\ref{t.giknmeasures}}\label{sec.desintegrou}

The main idea for the proof can be extracted from Figures~\ref{f.boxes} and \ref{fig.gikn} and the recursive equation
\[
\pi_{n+1}=k_{n+1}\pi_n+R_{n+1}.
\]
A portion of the periodic orbit $X_n$, on the fiber, consists in applying the initial contractive map $g_0$. This portion of the orbit can be localized inside smaller and smaller ``horizontal strips'' $\Sigma_k\times A_n$, where $A_n$ is a disjoint union of finitely many small intervals of the circle.  

The key is that the proportion of $X_n$ occupied by this part decreases but stabilizes in a positive number, due to the summability condition \eqref{key} in Definition~\ref{def.gikn}.

\subsection{Reduction to a statistical lemma}

Using our convention for periodic orbits, we denote $x_n=(\xi_n,q_n)$ where $\xi_n\in\Sigma_k$ is an infinite repetition of a finite sequence $\omega^n_{1}\dots\omega^n_{\pi_n}$.

Fix $n>0$ and fix any interval $J^{\prime}\subset J_n$ such that $q_n\in J^{\prime}$ and $J_{n+1}\subset J^{\prime}$. For each such $J^{\prime}$ consider the union of intervals (see Figure~\ref{f.boxes}).
\[I_n(J^{\prime})\eqdef
\bigcup_{\ell_n=0}^{k_n}\bigcup_{\ell_{n-1}=0}^{k_{n-1}}\dots\bigcup_{\ell_1=0}^{k_1}g_0^{\ell_1}\circ g_1^{\ell_2}\circ\dots\circ g_{n-1}^{\ell_n}\left(J^{\prime}\right)\]
and 
\[A_n(J^{\prime})\eqdef I_n(J^{\prime})\cup\left(\bigcup_{\ell=1}^{\pi_0}f_{\omega^0_{\ell}}\circ\dots\circ f_{\omega^0_1}(I_n(J^{\prime}))\right).\]
Notice that $A_n(J^{\prime})$ is a union of two-by-two disjoint intervals because the maps $g_n$ are contractions. Since $0<|J_{n+1}|<|J_n|$ one can choose $J^{\prime}$ as above, satisfying moreover that $\mu\left(\partial(\Sigma_k\times A_n(J^{\prime}))\right)=0$, for every $n\geq 1$. For simplicity, we shall denote $A_n\eqdef A_n(J^{\prime})$, with this special choice of $J^{\prime}$, for each $n$. This yields the lemma below.

\begin{lemma}
\label{l.preparandooterreno}
For every $n\geq 1$ the following two properties hold.
\begin{enumerate}
	\item \label{a} If $m>n$ then $A_m\subset A_n$.
	\item \label{b} $\mu(\Sigma_k\times A_n)=\lim_{m\to\infty}\mu_{X_m}(\Sigma_k\times A_n)$
\end{enumerate}
\end{lemma}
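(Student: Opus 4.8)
The plan is to deduce both statements from the definitions of $I_n$ and $A_n$, the nesting $J'_{n+1}\subseteq J_{n+1}\subseteq J'_n$, and the contracting behaviour of the maps $g_n$. For the first assertion, since each of the finitely many maps $f_{\omega^0_\ell}\circ\dots\circ f_{\omega^0_1}$ occurring in the definition of $A_n$ is independent of $n$, it suffices to prove $I_{n+1}(J'_{n+1})\subseteq I_n(J'_n)$ for every $n\ge1$; the inclusion $A_{n+1}\subseteq A_n$ then follows, and $A_m\subseteq A_n$ for all $m>n$ by transitivity. To handle the $I$'s I would peel off the innermost union in the definition of $I_{n+1}$: writing $I_n(B)\eqdef\bigcup_{\ell_1=0}^{k_1}\dots\bigcup_{\ell_n=0}^{k_n}g_0^{\ell_1}\circ\dots\circ g_{n-1}^{\ell_n}(B)$ for an arbitrary $B\subseteq\mathbb{S}^1$, one obtains the recursion $I_{n+1}(J')=\bigcup_{j=0}^{k_{n+1}}I_n\!\left(g_n^{j}(J')\right)$, and $B\mapsto I_n(B)$ is monotone under inclusion. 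Thus everything reduces to the claim that $g_n^{j}(J'_{n+1})\subseteq J'_n$ for $0\le j\le k_{n+1}$.

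For $j=0$ this is exactly the chain $J'_{n+1}\subseteq J_{n+1}\subseteq J'_n$, guaranteed by \eqref{jn} and the choice of $J'_n$. For $j\ge1$ I would use that by \eqref{apalavra} the map $g_n$ is a $C^1$ contraction of $J_n$ into itself fixing $q_n$, so that $g_n^{j}(J_{n+1})\subseteq J_n$ for all $j$; one then refines the choice of $J'_n$ so that, besides $J_{n+1}\subseteq J'_n\subseteq J_n$ and $q_n\in J'_n$, also $\bigcup_{j=0}^{k_{n+1}}g_n^{j}(J_{n+1})\subseteq J'_n$ --- this is possible because all these sets lie in $J_n$, and it does not obstruct the earlier requirement $\mu(\partial(\Sigma_k\times A_n))=0$ since the admissible $J'_n$ still form a rich family. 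With such a $J'_n$ one has $g_n^{j}(J'_{n+1})\subseteq g_n^{j}(J_{n+1})\subseteq J'_n$, finishing the first assertion. I expect the verification of this invariance/nesting --- pinning down that the strips $\Sigma_k\times A_n$ really do decrease, and that the refined choice of $J'_n$ is consistent with the null-boundary condition --- to be the only genuinely delicate point; everything else is bookkeeping.

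The second assertion is routine. By Definition~\ref{defi.pomeasures} the measures $\mu_{X_m}$ converge to $\mu$ in the weak-$\star$ topology of $\cP(M)$. The topological boundary of $\Sigma_k\times A_n$ is $\Sigma_k\times\partial A_n$, where $\partial A_n$ is the finite set of endpoints of the pairwise disjoint intervals whose union is $A_n$, and $J'_n$ was chosen precisely so that $\mu(\Sigma_k\times\partial A_n)=0$. The portmanteau theorem then yields $\mu_{X_m}(\Sigma_k\times A_n)\to\mu(\Sigma_k\times A_n)$, which is the second assertion.
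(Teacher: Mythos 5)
Your proof is correct, and it is in fact more complete than the paper's: the paper offers no argument at all for this lemma, introducing it only with the phrase ``This yields the lemma below'' after the construction of $A_n(J')$. Your reduction of item (1) to the inclusions $g_n^{j}(J'_{n+1})\subseteq J'_n$ for $0\le j\le k_{n+1}$, via the recursion $I_{n+1}(J')=\bigcup_{j=0}^{k_{n+1}}I_n\!\left(g_n^{j}(J')\right)$ and the monotonicity of $B\mapsto I_n(B)$, is the right decomposition, and it correctly exposes a point the paper glosses over: the constraints the paper imposes on $J'_n$ (namely $J_{n+1}\subseteq J'_n\subseteq J_n$, $q_n\in J'_n$, and the null-boundary condition) do not by themselves yield $g_n^{j}(J'_{n+1})\subseteq J'_n$ for $j\ge 1$, so the nesting $A_m\subseteq A_n$ genuinely requires your additional stipulation that $J'_n$ contain $\bigcup_{j=0}^{k_{n+1}}g_n^{j}(J_{n+1})$. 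As you note, this is harmless: those sets all lie in $J_n$, the endpoints of $A_n(J')$ depend injectively and continuously on the endpoints of $J'$ through finitely many diffeomorphisms, and the null-boundary requirement excludes at most countably many choices. The one hypothesis you should state explicitly is that condition (3) of Definition~\ref{def.gikn} is being read, as in the GIKN construction it models, to mean that $g_n$ maps $J_n$ into itself; fixing $q_n$ with $|g_n'|<1$ on $J_n$ does not literally force invariance of $J_n$ when $q_n$ is off-center, and without invariance even $g_n^{j}(J_{n+1})\subseteq J_n$ is not guaranteed. Item (2) is exactly the portmanteau argument the paper intends with its special choice of $J'$, using that $\partial(\Sigma_k\times A_n)=\Sigma_k\times\partial A_n$ with $\partial A_n$ a finite set of endpoints.
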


The key lemma of our proof is the following statistical estimation about the periodic orbits $X_n$.

\begin{lemma}
\label{l.fatalitylemma}
There exists $\rho>0$ such that $\mu_{X_n}(\Sigma_k\times A_n)\geq\rho$, for every $n\geq 1$.
\end{lemma}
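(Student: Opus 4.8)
The plan is to convert the inequality into a counting statement along the periodic orbit $X_n$, and then to exploit the self-similar block structure of the word $\xi_n$ together with the nestedness of the intervals $J_n$. First, since $T^{\ell}(\xi_n,q_n)=(\sigma^{\ell}(\xi_n),f^{\ell}_{\xi_n}(q_n))$ and $\Sigma_k\times A_n$ imposes no condition on the base coordinate, I would rewrite
\[
\mu_{X_n}(\Sigma_k\times A_n)=\frac{1}{\pi_n}\,\#\big\{\ell\in\{0,\dots,\pi_n-1\}\,:\,f^{\ell}_{\xi_n}(q_n)\in A_n\big\},
\]
so it suffices to exhibit a set of times $\ell$ of definite proportion whose orbit point lands in $A_n$.

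The heart of the argument is to use that $\xi_n=\xi_{n-1}^{k_n}\alpha_n$ (iterated down to level $0$) and that the finite word $\omega^{j}_1\dots\omega^{j}_{\pi_j}$ realizes $g_j$, so that $f^{\pi_j}_{\xi_j}=g_j$ for every $j$ (condition \eqref{apalavra} in Definition~\ref{def.gikn}). Reading the orbit of $q_n$ over one period as a word of length $\pi_n=k_n\pi_{n-1}+R_n$, it decomposes into $k_n$ blocks of length $\pi_{n-1}$ — the $j$-th one starting at $g_{n-1}^{j}(q_n)$ — followed by a tail of length $R_n$; each block is in turn the orbit of its starting point under $f_{\xi_{n-1}}$, which decomposes the same way with parameters $k_{n-1},\pi_{n-2},R_{n-1}$, and so on recursively down to level $0$ (this is the picture of Figures~\ref{fig.gikn} and \ref{f.boxes}). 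I would call a time $\ell$ \emph{good} if it sits inside a block at every level of this recursion; these are exactly the
\[
\ell=m_0+\sum_{l=1}^{n}i_l\,\pi_{l-1},\qquad m_0\in\{0,\dots,\pi_0-1\},\ \ i_l\in\{0,\dots,k_l-1\},
\]
and unwinding the recursion gives, for such $\ell$,
\[
f^{\ell}_{\xi_n}(q_n)=f^{m_0}_{\xi_0}\circ g_0^{\,i_1}\circ g_1^{\,i_2}\circ\cdots\circ g_{n-1}^{\,i_n}(q_n).
\]
A short mixed-radix argument (set $S_t=m_0+\sum_{l\le t}i_l\pi_{l-1}$; then $S_t\le(i_t+1)\pi_{t-1}-1<k_t\pi_{t-1}\le\pi_t$, and $i_t=\lfloor S_t/\pi_{t-1}\rfloor$) shows these $\ell$ are pairwise distinct and all lie in $\{0,\dots,\pi_n-1\}$, so there are exactly $\pi_0\prod_{l=1}^{n}k_l$ good times.

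Next I would check that every good orbit point lies in $A_n$: since $q_n\in J'$ (the interval used to define $A_n$) and $i_l\le k_l$ for all $l$, the point $g_0^{i_1}\circ\cdots\circ g_{n-1}^{i_n}(q_n)$ lies in $g_0^{i_1}\circ\cdots\circ g_{n-1}^{i_n}(J')\subset I_n(J')$; applying $f^{m_0}_{\xi_0}=f_{\omega^0_{m_0}}\circ\cdots\circ f_{\omega^0_1}$ (the identity if $m_0=0$) then puts it either in $I_n(J')$ or in one of the sets $f_{\omega^0_{m_0}}\circ\cdots\circ f_{\omega^0_1}(I_n(J'))$ with $1\le m_0\le\pi_0-1$, hence in $A_n$ by definition. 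Combining, $\mu_{X_n}(\Sigma_k\times A_n)\ge\pi_0\prod_{l=1}^{n}k_l\big/\pi_n$, and a telescoping computation using $\pi_l=k_l\pi_{l-1}+R_l$ and $\lambda_l=R_l/(k_l\pi_{l-1})$ rewrites the right-hand side as $\prod_{l=1}^{n}\tfrac{k_l\pi_{l-1}}{\pi_l}=\prod_{l=1}^{n}\tfrac{1}{1+\lambda_l}$. By the summability condition \eqref{key} in Definition~\ref{def.gikn}, $\sum_l\lambda_l<\infty$, so $\prod_{l\ge1}(1+\lambda_l)$ converges to a finite number and $\rho\eqdef\prod_{l\ge1}\tfrac{1}{1+\lambda_l}>0$ works.

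The main obstacle is the bookkeeping in the second step: precisely matching the recursive block/tail decomposition of the orbit of $q_n$ with the closed formula for $f^{\ell}_{\xi_n}(q_n)$ and with the index ranges appearing in the definition of $I_n(J')$, and checking that the good times are genuinely distinct and below $\pi_n$. Once this is set up, the inclusion of the good points in $A_n$ is immediate from the way $A_n$ is built, and the final estimate is elementary; note in particular that the contraction hypothesis $|g_n'|_{J_n}|<1$ is not needed for this lemma (it only serves to make $A_n$ a disjoint union of intervals), and that Lemma~\ref{l.preparandooterreno} is not used here.
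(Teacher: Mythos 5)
Your proof is correct and follows essentially the same route as the paper: the paper's Proposition~\ref{l.iterative} isolates exactly your ``good'' times (those lying in an $\omega^0$-block at every level), uses the same identity $f^{\ell}_{\xi_n}=f^{m_0}_{\xi_0}\circ g_0^{i_1}\circ\dots\circ g_{n-1}^{i_n}$ (equation~\eqref{e.fatalityequation}) to place the corresponding orbit points in $A_n$, and arrives at the same lower bound $\prod_{l\le n}(1+\lambda_l)^{-1}\ge e^{-\lambda}>0$. The only cosmetic difference is that you count the good times directly via a mixed-radix enumeration where the paper runs an induction on $n$.
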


Let us show how to conclude Theorem~\ref{t.giknmeasures} from this lemma.

\begin{proof}[Proof of Theorem~\ref{t.giknmeasures}]
Consider $\cU=\cap_{n=1}^{\infty}\Sigma_k\times A_n$. Fix $n\geq 1$. By \eqref{a} of Lemma~\ref{l.preparandooterreno} and by Lemma~\ref{l.fatalitylemma}, for every $m>n$ one has that 
\[
\mu_{X_m}(\Sigma_k\times A_n)\geq\mu_{X_m}(\Sigma_k\times A_m)\geq\rho. 
\]
Thus, by \eqref{b} of Lemma~\ref{l.preparandooterreno} we deduce that $\mu(\cU)\geq\rho>0$. 

Let $\nu$ be the projection of $\mu$ in $\Sigma_k$ so that $\mu=\int_{\Sigma_2}\mu_{\xi}d\nu$, where for $\nu$ almost all $\xi\in\Sigma_2$, $\mu_{\xi}$ is a probability over the fiber $\{\xi\}\times\mathbb{S}^1$, the disintegration of $\mu$ along that fiber. In particular,
\begin{equation}
\label{e.integral}
\mu(\cU)=\int_{\Sigma_2}\mu_{\xi}(\cU)d\nu\geq\rho.
\end{equation} 
Notice that for $\nu$ almost all $\xi\in\Sigma_2$ one has $\mu_{\xi}(\cU)=\mu_{\xi}\left((\{\xi\}\times\mathbb{S}^1)\cap\cU\right)$. Also, one has that 
\[(\{\xi\}\times\mathbb{S}^1)\cap\cU=\bigcup_{n=1}^{\infty}\bigcup_{\ell=1}^{\pi_0}\bigcup_{\ell_n=0}^{k_n}\bigcup_{\ell_{n-1}=0}^{k_{n-1}}\dots\bigcup_{\ell_1=0}^{k_1}\{\xi\}\times f_{\omega^0_{\ell}}\circ\dots\circ f_{\omega^0_1}\circ g_0^{\ell_1}\circ g_1^{\ell_2}\circ\dots\circ g_{n-1}^{\ell_n}(\{q\}).\]

Let $\{a^{\xi}_n\}$ be an enumeration of the set on the right-hand side of the above equality. This implies that $\mu_{\xi}(\cU)=\sum_{n=1}^{\infty}\mu_{\xi}(a^{\xi}_n)$, for $\nu$ almost all $\xi\in\Sigma_2$. On the other hand, 
formula \eqref{e.integral} implies that there exists a set $W\subset\Sigma_2$ with $\nu(W)>0$ such that $\xi\in W$ implies $\mu_{\xi}(\cU)>0$. 

Therefore, for $\nu$ almost every $\xi\in W$ there exists some $n$ such that $\mu_{\xi}(a^{\xi}_n)>0$, and so the measure $\mu_{\xi}$ is atomic. This achieves the desired conclusion.  
\end{proof}

\subsection{The iterative estimate}
Recall from \eqref{key} in Definition~\ref{def.gikn} that the sum $\sum_{n=1}^{\infty}\lambda_n$ is finite. Let us denote this sum by $\lambda$. Define recursively a sequence $\rho_n$ by 
\[
\rho_1\eqdef\frac{1}{1+\lambda_1}\:\:\:\textrm{and}\:\:\:\rho_{n+1}=\frac{\rho_n}{1+\lambda_{n+1}}.
\]

The recursive formula for $\rho_n$ yields that $\rho_n$ is a decreasing sequence satisfying 
\[
\rho\eqdef\lim_{n\to\infty}\rho_n\geq e^{-\lambda}>0.
\]

Now, Lemma~\ref{l.fatalitylemma} is a direct consequence of the following

\begin{prop}
\label{l.iterative}
For each $n$ the proportion of the orbit $X_n$ whose fiber coordinate belongs to $A_n$ is larger than $\rho_n$, i.e. 
\[
\frac{\#\left\{\ell\in\{1,...,\pi_n\};f_{\omega^n_{\ell}}\circ\dots\circ f_{\omega^n_1}(q_n)\in A_n\right\}}{\pi_n}\geq\rho_n,
\]
for every $n\geq 1$.
\end{prop}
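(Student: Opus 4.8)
\medskip

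The plan is to prove the estimate by induction on $n$, tracking the fiber orbit of $x_{n+1}=(\xi_{n+1},q_{n+1})$ against that of $x_n=(\xi_n,q_n)$ and exploiting the word decomposition $\xi_{n+1}=\xi_n^{k_{n+1}}\alpha_{n+1}$ together with the contraction property of $g_n$. First I would set up the base case $n=1$: here $\xi_1=\xi_0^{k_1}\alpha_1$ has length $\pi_1=k_1\pi_0+R_1$, and during each of the $k_1$ copies of $\xi_0$ the fiber point is obtained by applying iterates of $g_0$ (interleaved with the $f_{\omega^0_\ell}$) to $q_1\in J_1\subset J_0$; since $g_0(J_0)\subset J_0$, all $k_1\pi_0$ of those iterates land in $I_1(J')$ or its $f_{\omega^0_\ell}$-images, hence in $A_1$. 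Thus the count is at least $k_1\pi_0$, giving proportion $\ge k_1\pi_0/(k_1\pi_0+R_1) = 1/(1+\lambda_1)=\rho_1$.

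\medskip

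For the inductive step, suppose the claim holds at level $n$, i.e. at least $\rho_n\pi_n$ of the $\pi_n$ fiber-positions of $X_n$ lie in $A_n$. Now write out the period $\pi_{n+1}=k_{n+1}\pi_n+R_{n+1}$ of $X_{n+1}$: the orbit of $q_{n+1}$ consists of $k_{n+1}$ consecutive ``passes'' of length $\pi_n$ — during which the fiber coordinate runs through $g_n^{j}(q_{n+1}),\ g_n^{j}\!\circ(\text{partial }\xi_n)(q_{n+1})$, i.e. exactly the $X_n$-orbit pattern but applied to the point $g_n^{j}(q_{n+1})\in J_n$ instead of $q_n$ — followed by the $R_{n+1}$ extra steps coming from $\alpha_{n+1}$. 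The key observation is that each such pass sits inside $A_n$ to the same extent as the $X_n$-orbit does: more precisely, the $\ell$-th fiber point of the $j$-th pass is $f_{\omega^n_\ell}\!\circ\cdots\circ f_{\omega^n_1}\circ g_n^{j}(q_{n+1})$, and since $g_n^j(q_{n+1})$ lies in $J_n$ (indeed in $J_{n+1}\subset J'$ after noting $q_{n+1}\in J_{n+1}$ and $g_n(J_n)\subset J_n$), this point belongs to the same component of the ``nested box'' structure defining $A_{n+1}\subset A_n$ whenever the corresponding $X_n$-point $f_{\omega^n_\ell}\!\circ\cdots\circ f_{\omega^n_1}(q_n)$ belongs to $A_n$ — here one uses that $A_{n+1}=A_{n+1}(J')$ is built from $A_n$ by replacing the innermost interval $J_n\ni q_n$ by the sub-interval containing the whole $g_n$-forward orbit of $q_{n+1}$, together with the definition of $I_{n+1}(J')$ which adds precisely the extra union $\bigcup_{\ell_{n+1}=0}^{k_{n+1}}g_n^{\ell_{n+1}}(\cdot)$ absorbing the index $j$. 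Hence each pass contributes at least $\rho_n\pi_n$ points in $A_{n+1}$, for a total of at least $k_{n+1}\rho_n\pi_n$ good positions out of $\pi_{n+1}$, giving proportion
\[
\frac{k_{n+1}\rho_n\pi_n}{k_{n+1}\pi_n+R_{n+1}}=\frac{\rho_n}{1+\lambda_{n+1}}=\rho_{n+1},
\]
which closes the induction.

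\medskip

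I expect the main obstacle to be the bookkeeping in the inductive step: one must check carefully that the set $A_{n+1}$ really does contain, for each $j=0,\dots,k_{n+1}-1$, the translate-by-$g_n^{j}$ of the portion of $\{f_{\omega^n_\ell}\!\circ\cdots\circ f_{\omega^n_1}(q_n)\}$ that lay in $A_n$ — i.e. that passing from $A_n$ to $A_{n+1}$ loses nothing except shrinking $J_n$ to $J'$ and adds exactly the outer union over $\ell_{n+1}$. This is where Figure~\ref{f.boxes} and the explicit formula for $I_n(J')$ do the work; the disjointness of the intervals (guaranteed by the contractions $g_m$ and by condition~\eqref{ojotazero}) ensures the counting is not spoiled by overlaps. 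Once this containment is established, the arithmetic with $\pi_{n+1}=k_{n+1}\pi_n+R_{n+1}$ and the recursion $\rho_{n+1}=\rho_n/(1+\lambda_{n+1})$ is immediate, and Lemma~\ref{l.fatalitylemma} follows since $\rho_n\downarrow\rho\ge e^{-\lambda}>0$.
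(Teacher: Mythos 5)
Your plan is correct and follows essentially the same route as the paper: induction on $n$ via the word decomposition $\xi_{n+1}=\xi_n^{k_{n+1}}\alpha_{n+1}$, the observation that the innermost union $\bigcup_{\ell_{n+1}=0}^{k_{n+1}}g_n^{\ell_{n+1}}(\cdot)$ in the definition of $I_{n+1}(J^{\prime})$ absorbs the pass index $j$, and the count $k_{n+1}\rho_n\pi_n/\pi_{n+1}=\rho_{n+1}$. The only refinement needed is that the induction hypothesis should be the combinatorial statement that at least $\rho_n\pi_n$ positions $\ell$ lie inside a block $\omega^0_1\dots\omega^0_{\pi_0}$ (so that $f_{\omega^n_\ell}\circ\dots\circ f_{\omega^n_1}$ factors as $f_{\omega^0_i}\circ\dots\circ f_{\omega^0_1}\circ g_0^{\ell_1}\circ\dots\circ g_{n-1}^{\ell_n}$), rather than the literal membership of the $X_n$-point in $A_n$, since it is this structural form --- not membership in the set $A_n$ per se --- that transfers to $A_{n+1}$; this is exactly how the paper organizes the argument, splitting it into a structural claim and a separate proportion count.
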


The proof of this proposition has two main parts. First, we decompose the word $\omega^n$ as several repetitions of the initial word $\omega_0$ plus the ``noisy'' words with length $R_j$ and we prove that along each such repetition of $\omega^0$ the orbit of $x_n$ has its fiber coordinate inside $A_n$. The second step is to estimate the proportion that such repetitions occupy inside the word $\omega^n$.

We now conclude the proof of Theorem~\ref{t.giknmeasures} by establishing Lemma~\ref{l.iterative}.

\begin{proof}[Proof of Proposition~\ref{l.iterative}]
Since $g_1=T_{R_1}\circ g_0^{k_1}$, due to condition \eqref{apalavra} in Definition~\ref{def.gikn}, we must have $\xi_1=(\xi_0)^{k_1}\alpha_{R_1}$, where $\alpha_{R_1}$ is a finite word of length $R_1$. Similarly, since $g_2=T_{R_2}\circ g_1^{k_2}$ there must exist a finite word $\alpha_{R_2}$ of length $R_2$ such that 
\[
\xi_2=(\xi_1)^{k_2}\alpha_{R_2}=[(\xi_0)^{k_1}\alpha_{R_1}]^{k_2}\alpha_{R_2}.
\]
Proceeding in this way, we see that for each $n$ the finite word $\xi_n$ is composed by blocks of the form  $\omega^0_1\dots\omega^0_{\pi_0}$ plus blocks $\alpha_{R_j}$. We claim that for each $\ell$ so that $\omega^n_{\ell}$ belongs to one of the blocks $\omega^0_1\dots\omega^0_{\pi_0}$ then $f_{\omega^n_{\ell}}\circ\dots\circ f_{\omega^n_1}(q_n)\in A_n$.

Before proving the claim, observe that for every such $\ell$ one has that
\begin{equation}
\label{e.fatalityequation}
f_{\omega^n_\ell}\circ\dots\circ f_{\omega^n_1}=f_{\omega^0_j}\circ\dots\circ f_{\omega^0_1}\circ g_0^{\ell_1}\circ\dots\circ g_{n-1}^{\ell_n},
\end{equation}
for some $j=1,..\pi_0-1$ and some choice of numbers $\ell_k$ satisfying $\ell_k\in\{1,...,k_n\}$.

Let us now prove the claim by induction over $n$. The equality $\xi_1=(\xi_0)^{k_1}\alpha_{R_1}$ and the fact that $g_0=f_{\omega^0_{\pi_0}}\circ\dots\circ f_{\omega^0_1}$ establishes the validity of the claim for $n=1$. Assume it has been shown for $n$ and let us prove it for $n+1$. As before, the equality $g_{n+1}=T_{R_{n+1}}\circ g_n^{k_{n+1}}$ implies that $\xi_{n+1}=(\xi_n)^{k_{n+1}}\alpha_{R_{n+1}}$. In particular, this implies that $\omega^{n+1}_\ell=\omega^n_\ell$ for every $\ell\leq\pi_n$.

Therefore, the induction hypothesis, the definition of the sets $A_n$ and equality \eqref{e.fatalityequation} all taken together imply that for every $\ell\leq\pi_n$ so that $\omega^n_{\ell}$ belongs to one of the blocks $\omega^0_1\dots\omega^0_{\pi_0}$ inside $\xi_n$ we have that $f_{\omega^{n+1}_\ell}\circ\dots\circ f_{\omega^{n+1}_1}(q_{n+1})\in A_{n+1}$ if and only if $f_{\omega^{n}_\ell}\circ\dots\circ f_{\omega^{n}_1}(q_{n})\in A_{n}$

Notice also that $f_{\omega^{n+1}_{\pi_n}}\circ\dots\circ f_{\omega^{n+1}_1}=g_n$. Take $\pi_n<\ell<k_{n+1}\pi_n$ and $r=1,...,\pi_n$ so that $\ell=d\pi_n+r$, for some positive integer $d$.

By the induction hypothesis and by \eqref{e.fatalityequation}, for every $r$ such that $\omega^n_{r}$ belongs to one of the blocks $\omega^0_1\dots\omega^0_{\pi_0}$ that compose $\xi_n$ (say $\omega^n_r=\omega^0_i$) we have that $f_{\omega^{n+1}_\ell}\circ\dots\circ f_{\omega^{n+1}_1}(q_{n+1})$ belongs to
\[
f_{\omega^0_i}\circ\dots\circ f_{\omega^0_1}\left(\bigcup_{\ell_n=0}^{k_{n+1}}\bigcup_{\ell_{n}=0}^{k_{n}}\dots\bigcup_{\ell_1=0}^{k_1}g_0^{\ell_1}\circ\dots\circ g_{n-1}^{\ell_{n-1}}\circ g_n^{\ell_n}\left(J_{n+1}\right)\right).
\]
Since the right-hand side above is contained in $A_{n+1}$, this establishes the claim.
	
Using this claim, we are reduced to estimate the proportion of $\ell=1,...,\pi_{n}$ such that there exists some $j=1,...,\pi_0$ such that $\omega^n_{\ell+i-j}=\omega^0_i$, for every $i=1,\dots\pi_0$. 	
	
We proceed again by induction over $n$. Let us begin by proving it for $n=1$. As $\omega^1=(\omega^0)^{k_1}\alpha_{R_1}$, the proportion we are looking for is certainly larger or equal than
\[
\frac{k_1\pi_0}{\pi_1}=\frac{k_1\pi_0}{k_1\pi_0+R_1}=\rho_1.
\] 
Now, assume the estimate is proven for $n$, and let us prove for $n+1$. The induction assumption says that at least $\rho_n\pi_n$ numbers $\ell\in\{1,\dots,\pi_n\}$ satisfy $\omega^n_{\ell+i-j}=\omega^0_i$, for every $i=1,\dots\pi_0$ and some $j=1,\dots\pi_0$.

Since $\omega^{n+1}=(\omega^n)^{k_{n+1}}\alpha_{R_{n+1}}$, where $\alpha_{R_{n+1}}$ is a finite sequence of symbols with length $R_{n+1}$, we deduce that the proportion of $\ell=1,\dots,\pi_{n+1}$ such that $\omega^n_{\ell+i-j}=\omega^0_i$, for every $i=1,\dots\pi_0$ and some $j=1,\dots\pi_0$ is larger or equal than  
\[
\frac{k_{n+1}\rho_n\pi_n}{\pi_{n+1}}=\frac{k_{n+1}\rho_n\pi_n}{k_{n+1}\pi_{n}+R_{n+1}}=\rho_{n+1}.
\]	 
This establishes the induction and concludes the proof.
\end{proof}

\bibliographystyle{plain}
\bibliography{refs}

\end{document}